\title[Semiring isomorphisms between rational function semifields]{Semiring isomorphisms between rational function semifields of tropical curves induce isomorphisms between tropical curves}
\author{Song JuAe}
\address{Tokyo Metropolitan University 1-1 Minami-Ohsawa, Hachioji, Tokyo, 192-0397, Japan.}
\email{song-juae@ed.tmu.ac.jp}
\subjclass[2020]{Primary 14T10; Secondary 14T20}
\keywords{isomorphisms between tropical curves, isomorphisms between rational function semifields of tropical curves}
\newtheorem{dfn}{Definition}[section]
\newtheorem{thm}[dfn]{Theorem}
\newtheorem{prop}[dfn]{Proposition}
\newtheorem{cor}[dfn]{Corollary}
\newtheorem{lemma}[dfn]{Lemma}
\newtheorem{rem}[dfn]{Remark}
\def\Gamma{\varGamma}
\begin{document}

\maketitle

\begin{abstract}
We prove that a semiring isomorphism between the rational function semifields of two tropical curves induces an expansive map between those tropical curves.
This semiring isomorphism and the expansive map respect zeros and poles of rational functions with their degrees.
As a corollary, we show that the automorphism group of a tropical curve is isomorphic to the $\boldsymbol{T}$-algebra automorphism group of its rational function semifield, where $\boldsymbol{T} := (\boldsymbol{R} \cup \{ -\infty \}, \operatorname{max}, +)$ is the tropical semifield.
Finally, we describe all semiring automorphisms of rational function semifields of all tropical curves.
\end{abstract}

\section{Introduction}
	\label{section1}

\newcounter{num}
\setcounter{num}{1}
The category of nonsingular projective curves and dominant morphisms is equivalent to the category of function fields of dimension one over $k$ and $k$-homomorphisms, where $k$ is an algebraically closed field (see \cite[Corollary. 6.12. in Chapter \Roman{num}]{Hartshorne}).

Is there a tropical analogue of this fact?
We factorize this question into the following three subquestions.

$(1)$ Is the rational function semifield $\operatorname{Rat}(\Gamma)$ of a tropical curve $\Gamma$ a finitely generated semifield over the tropical semifield $\boldsymbol{T} := (\boldsymbol{R} \cup \{- \infty \}, \operatorname{max}, +)$?
If so, does $\operatorname{Rat}(\Gamma)$ have dimension one over $\boldsymbol{T}$ (for the definition of dimension, see \cite{Joo=Mincheva})?

$(2)$ What kind of semifields over $\boldsymbol{T}$ determines a tropical curve?
In particular, if $(1)$ holds, does a finitely generated semifield over $\boldsymbol{T}$ of dimension one determine a tropical curve?
How do we construct it?

$(3)$ For two tropical curves $\Gamma_1$ and $\Gamma_2$, does a $\boldsymbol{T}$-algebra isomorphism $\operatorname{Rat}(\Gamma_1) \to \operatorname{Rat}(\Gamma_2)$ induce an isomorphim (i.e., a finite harmonic morphism of degree one) $\Gamma_1 \to \Gamma_2$?

In \cite{JuAe3}, the author considered the question $(1)$.
The main theorem of \cite{JuAe3} states that $\operatorname{Rat}(\Gamma)$ is finitely generated over $\boldsymbol{T}$ as a tropical semifield.
The rest of $(1)$ and the question $(2)$ are still open.
The question $(3)$ is our main topic; the answer is YES.
The following is our main theorem:

\begin{thm}
	\label{thm1}
Let $\Gamma_1, \Gamma_2$ be tropical curves.
Let $\psi : \operatorname{Rat}(\Gamma_1) \to \operatorname{Rat}(\Gamma_2)$ be a semiring isomorphism between their rational function semifields.
Then $\psi(1) \in \boldsymbol{R}_{>0}$ and $\psi$ induces a $\psi(1)$-expansive map $\varphi : \Gamma_1 \to \Gamma_2$.
In particular, when $\psi$ is a $\boldsymbol{T}$-algebra isomorphism, $\psi(1) = 1$ and $\varphi$ is a finite harmonic morphism of degree one.
\end{thm}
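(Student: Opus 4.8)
The plan is to reconstruct each tropical curve from its rational function semifield in three stages: first the constants, then the points, then the metric. Write $\oplus = \operatorname{max}$ and $\odot = +$ for the semiring operations, denote by $\underline{c}$ the constant function of value $c$, and note that the constant subsemifield is $\boldsymbol{T} \subseteq \operatorname{Rat}(\Gamma_i)$. I begin with the claim $\psi(1) \in \boldsymbol{R}_{>0}$, which I extract from a purely multiplicative description of $\boldsymbol{T}$. The key observation is that a unit $f$ is \emph{infinitely divisible}, meaning that for every $n \ge 1$ there is $g$ with $g^{\odot n} = f$ (equivalently $n \cdot g = f$), if and only if $f$ is a constant: indeed $g = f/n$ has integer slopes for all $n$ exactly when every slope of $f$ vanishes. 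Since $\psi$ is a semiring isomorphism it preserves infinite divisibility and the additive identity, so $\psi(\boldsymbol{T}) = \boldsymbol{T}$. The restriction $\psi|_{\boldsymbol{T}}$ is then a semiring automorphism of $\boldsymbol{T}$; being additive for $\odot = +$ and order preserving for $\oplus = \operatorname{max}$, it is forced to be $x \mapsto a x$ for a unique $a > 0$. As $\underline{1} > \underline{0}$ and $\psi$ fixes the multiplicative identity $\underline{0}$ while preserving the order, we get $\psi(1) = a > 0$.

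Next I identify points with homomorphisms. Every point $p \in \Gamma$ yields a $\boldsymbol{T}$-algebra homomorphism $\operatorname{ev}_p : \operatorname{Rat}(\Gamma) \to \boldsymbol{T}$, $f \mapsto f(p)$, and the essential input, available once $\operatorname{Rat}(\Gamma)$ is finitely generated (cf.\ \cite{JuAe3}), is the converse: every $\boldsymbol{T}$-algebra homomorphism $\operatorname{Rat}(\Gamma) \to \boldsymbol{T}$ equals $\operatorname{ev}_p$ for a unique $p$. Granting this, for $p \in \Gamma_1$ the composite $\operatorname{ev}_p \circ \psi^{-1} : \operatorname{Rat}(\Gamma_2) \to \boldsymbol{T}$ is a semiring homomorphism whose restriction to constants is $x \mapsto x/a$; post-composing with $\psi|_{\boldsymbol{T}}$ turns it into a $\boldsymbol{T}$-algebra homomorphism, hence into $\operatorname{ev}_{\varphi(p)}$ for a unique $\varphi(p) \in \Gamma_2$. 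This defines $\varphi : \Gamma_1 \to \Gamma_2$, characterized by $g(\varphi(p)) = a \cdot (\psi^{-1} g)(p)$ for all $g$; the same construction applied to $\psi$ yields a two-sided inverse, so $\varphi$ is a bijection. Continuity of $\varphi$ and $\varphi^{-1}$ then follows by embedding each $\Gamma_i$ into a Euclidean space through a finite generating set (\cite{JuAe3}) and observing that $\varphi$ transforms these coordinates by the fixed affine rule above.

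The heart of the argument is recovering the metric and showing $\varphi$ is $\psi(1)$-expansive. The metric is encoded in the integer slopes of rational functions, and the relation $g(\varphi(p)) = a \cdot (\psi^{-1} g)(p)$ lets me compare slopes across $\varphi$. Near a generic point $p$ (interior to an edge, with $\varphi(p)$ interior to an edge) let $\lambda$ be the local dilation of $\varphi$ in arc length. Feeding in a function $f = \psi^{-1} g$ of local slope $s \in \boldsymbol{Z}$ on $\Gamma_1$, the relation forces $g = \psi f$ to have local slope $s\, a/\lambda$ on $\Gamma_2$. Since $\psi$ is a bijection, every integer slope on $\Gamma_2$ is attained, so multiplication by $a/\lambda$ must carry $\boldsymbol{Z}$ onto $\boldsymbol{Z}$; positivity then gives $a/\lambda = 1$, that is $\lambda = a$ at every generic point. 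Hence $\varphi$ stretches arc length by the constant factor $a = \psi(1)$ and is $\psi(1)$-expansive; the same slope bookkeeping shows that vertices go to vertices and that $\operatorname{div}$, and so zeros and poles together with their degrees, is preserved.

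Finally, when $\psi$ is a $\boldsymbol{T}$-algebra isomorphism it fixes every constant, so $\psi|_{\boldsymbol{T}} = \mathrm{id}$ and $a = \psi(1) = 1$; then $\varphi$ is a $1$-expansive bijection, i.e.\ a surjective local isometry, which together with preservation of $\operatorname{div}$ is exactly a finite harmonic morphism of degree one. I expect the difficulty to sit in the second and third stages rather than the first: establishing that every $\boldsymbol{T}$-algebra homomorphism to $\boldsymbol{T}$ is an evaluation (so that $\varphi$ is defined on all of $\Gamma_2$, including at unbounded ends and points at infinity), and making the local slope argument rigorous when an edge may map onto several edges, so that the local dilation is everywhere well defined and provably equal to $a$. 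These two points, the ``points $=$ homomorphisms'' lemma and the integrality step forcing $\lambda = a$, are where the real work concentrates.
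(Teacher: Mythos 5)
Your first stage is sound and matches the paper: the infinite-divisibility characterization of constants is exactly how the paper proves $\psi(\boldsymbol{T})=\boldsymbol{T}$ (its Lemma \ref{lem3}), and your Cauchy-plus-monotonicity argument that $\psi|_{\boldsymbol{T}}$ is $t\mapsto a t$ with $a=\psi(1)>0$ is a legitimate shortcut for the paper's Proposition \ref{prop1}. But the second stage contains a genuine gap, and it is where all the content of the theorem lives. The ``points $=$ homomorphisms'' lemma --- every $\boldsymbol{T}$-algebra homomorphism $\operatorname{Rat}(\Gamma)\to\boldsymbol{T}$ is $\operatorname{ev}_p$ for a unique $p\in\Gamma$ --- is not available: the cited result of \cite{JuAe3} gives only finite generation of $\operatorname{Rat}(\Gamma)$ over $\boldsymbol{T}$, which in no way yields a tropical Nullstellensatz. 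Indeed, the reconstruction of $\Gamma$ from the algebra $\operatorname{Rat}(\Gamma)$ is precisely the paper's open question $(2)$ (settled by Jun only over $\boldsymbol{Q}\cup\{-\infty\}$ via strict valuations), so you are assuming a statement at least as strong as what must be proved. Worse, as stated the lemma is false at points at infinity: for $p\in\Gamma_\infty$ a rational function with a pole at $p$ satisfies $f(p)=+\infty\notin\boldsymbol{T}$, so $\operatorname{ev}_p$ is not a $\boldsymbol{T}$-valued homomorphism at all. At best the homomorphisms biject with $\Gamma\setminus\Gamma_\infty$, so your $\varphi$ is undefined exactly where continuity and the compactification matter; the paper has to treat these points by a separate mechanism (Lemma \ref{lem5}$(2)$, using the functions $\operatorname{CF}(\Gamma_1\setminus(y,x];\infty)$ with value $-\infty$ at the end).

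Your third stage has a secondary gap of the same flavor: the ``local dilation $\lambda$'' is only defined if you already know $\varphi$ is locally affine in arc length on edges, which continuity of $\varphi$ does not give; deducing it would require an argument of the strength of the metric statement itself. The paper avoids both gaps by never passing through homomorphisms: its key Lemma \ref{lem5} shows directly that $\psi(\operatorname{CF}(\{x\};\varepsilon))=\operatorname{CF}(\{x'\};\psi(1)\cdot\varepsilon)$ for a unique $x'$, proved by controlling irredundant representations of chip-firing moves (conditions $(\star)$, $(\star\star)$) with an induction on $\operatorname{val}(x)$, and then reads off distances from identities like $\operatorname{CF}(\{x\};\varepsilon)\oplus\operatorname{CF}(\{y\};\delta)\odot a$, with no differentiability input. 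Note also that your defining relation $g(\varphi(p))=a\cdot(\psi^{-1}g)(p)$, i.e.\ $\psi(f)=(a\cdot f)\circ\varphi^{-1}$, is the paper's Corollary \ref{cor5}, derived \emph{after} Theorem \ref{thm1} from finite generation --- so your plan in effect runs the paper's logic backwards from its strongest consequence. To repair your route you would have to prove the evaluation lemma for finite points (essentially redoing Lemma \ref{lem5}$(1)$ in dual language), extend $\varphi$ over $\Gamma_\infty$ by a limit or chip-firing argument, and replace the local-dilation step by a direct distance computation.
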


Here, for $r \in \boldsymbol{R}_{>0}$, an \textit{$r$-expansive map} between tropical curves $\varphi : \Gamma_1 \to \Gamma_2$ means a continuous bijection which induces a metric isomorphism between $\Gamma_1 \setminus \Gamma_{1 \infty}$ and $\Gamma_2 \setminus \Gamma_{2 \infty}$ with $r$ as its expansion factor, i.e., $r \cdot \operatorname{dist}(x, y) = \operatorname{dist}(\varphi(x), \varphi(y))$ for any $x, y \in \Gamma_1 \setminus \Gamma_{1 \infty}$, where $\Gamma_{i \infty}$ denotes the set of all points at infinity of $\Gamma_i$, $\operatorname{dist}(x, y)$ denotes the distance between $x$ and $y$, $\cdot$ denotes the usual multiplication of real numbers and we define $r \cdot \infty = \infty$.

Theorem \ref{thm1} is an extension of semiring automorphism $\boldsymbol{T} \to \boldsymbol{T}$ case, that is, for a semiring automorphism $\psi : \boldsymbol{T} \to \boldsymbol{T}$, $\psi(1) \in \boldsymbol{R}_{>0}$ and $\psi$ is the \textit{expansive map} $\boldsymbol{T} \to \boldsymbol{T}; t \mapsto \psi(1) \cdot t$, where we define $r \cdot (-\infty) = -\infty$ for any $r \in \boldsymbol{R}_{>0}$ (Proposition \ref{prop1}).
One of the most important consequences of Theorem \ref{thm1} is that the $\boldsymbol{T}$-algebra structure of the rational function semifield of a tropical curve perfectly determines (not only the topological structure but) the metric structure of the tropical curve.

Theorem \ref{thm1} has the following corollary:

\begin{cor}
	\label{cor1}
The following groupoids $\mathscr{C}, \mathscr{D}$ are isomorphic.

$(1)$ The class $\operatorname{Ob}(\mathscr{C})$ of objects  of $\mathscr{C}$ is the tropical curves.

For $\Gamma_1, \Gamma_2 \in \operatorname{Ob}(\mathscr{C})$, the set $\operatorname{Hom}_{\mathscr{C}}(\Gamma_1, \Gamma_2)$ of morphisms from $\Gamma_1$ to $\Gamma_2$ consists of the semiring isomorphisms $\operatorname{Rat}(\Gamma_1) \to \operatorname{Rat}(\Gamma_2)$.

$(2)$ The class $\operatorname{Ob}(\mathscr{D})$ of objects of $\mathscr{D}$ is the tropical curves.

For $\Gamma_1, \Gamma_2 \in \operatorname{Ob}(\mathscr{D})$, the set $\operatorname{Hom}_{\mathscr{D}}(\Gamma_1, \Gamma_2)$ of morphisms from $\Gamma_1$ to $\Gamma_2$ consists of the $r$-expansive maps $\Gamma_1 \to \Gamma_2$ for all $r \in \boldsymbol{R}_{>0}$.
\end{cor}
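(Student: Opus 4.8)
The plan is to construct an explicit isomorphism of groupoids $F \colon \mathscr{C} \to \mathscr{D}$ that is the identity on objects, together with an inverse $G \colon \mathscr{D} \to \mathscr{C}$, and then to check that $F$ and $G$ are mutually inverse functors. First I would record that $\mathscr{C}$ and $\mathscr{D}$ are indeed groupoids: the inverse of a semiring isomorphism is again a semiring isomorphism, and the inverse of an $r$-expansive map is an $r^{-1}$-expansive map, so every morphism on either side is invertible. Set $F(\Gamma) = \Gamma$ on objects. On morphisms, send a semiring isomorphism $\psi \colon \operatorname{Rat}(\Gamma_1) \to \operatorname{Rat}(\Gamma_2)$ to the map $\varphi \colon \Gamma_1 \to \Gamma_2$ it induces by Theorem \ref{thm1}. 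This is well defined as a map $\operatorname{Hom}_{\mathscr{C}}(\Gamma_1, \Gamma_2) \to \operatorname{Hom}_{\mathscr{D}}(\Gamma_1, \Gamma_2)$ precisely because Theorem \ref{thm1} guarantees $\psi(1) \in \boldsymbol{R}_{>0}$ and that $\varphi$ is $\psi(1)$-expansive.

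For the inverse I would define $G$ to be the identity on objects and, for an $r$-expansive map $\varphi \colon \Gamma_1 \to \Gamma_2$, set $G(\varphi)(f) := r \cdot (f \circ \varphi^{-1})$, where $\cdot$ is pointwise multiplication by the real number $r$. The scaling by $r$ is forced and is exactly what makes this land in $\operatorname{Rat}(\Gamma_2)$: if $f$ has integer slope $m$ along an edge of $\Gamma_1$, then $f \circ \varphi^{-1}$ has slope $m/r$ with respect to the metric of $\Gamma_2$, and multiplying values by $r$ restores the integer slope $m$; the behaviour at the points of $\Gamma_{2\infty}$ is controlled by the convention $r \cdot \infty = \infty$. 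Since $r > 0$, the map $G(\varphi)$ commutes with $\max$ and $+$, has two-sided inverse $g \mapsto r^{-1} \cdot (g \circ \varphi)$, and satisfies $G(\varphi)(1) = r$; hence it is a semiring isomorphism and a genuine morphism of $\mathscr{C}$. These are routine but necessary verifications.

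Next I would check functoriality. Identities are immediate: the identity semiring isomorphism is sent to the $1$-expansive identity map and vice versa. For composition it is cleanest to verify $G(\varphi' \circ \varphi) = G(\varphi') \circ G(\varphi)$ directly from the formula, using that the expansion factors multiply, that $(\varphi' \circ \varphi)^{-1} = \varphi^{-1} \circ \varphi'^{-1}$, and that $(r' r)$-scaling factors as $r'$-scaling after $r$-scaling; the corresponding statement for $F$ then follows once the two composites below are identified. The heart of the proof is that $F$ and $G$ are mutually inverse. For $G \circ F = \operatorname{id}_{\mathscr{C}}$ I would extract from the construction in the proof of Theorem \ref{thm1} the identity $\psi(f) = \psi(1) \cdot (f \circ \varphi^{-1})$ for all $f \in \operatorname{Rat}(\Gamma_1)$, which says exactly that $\psi = G(\varphi) = G(F(\psi))$. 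For $F \circ G = \operatorname{id}_{\mathscr{D}}$ I would note that $F(G(\varphi))$ is, by Theorem \ref{thm1}, an $r$-expansive map inducing the semiring isomorphism $G(\varphi)$, and that $\varphi$ itself is such a map; a uniqueness statement then forces $F(G(\varphi)) = \varphi$.

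The main obstacle is this last identification, that is, pinning down the precise pullback relation between $\psi$ and its induced $\varphi$ and proving its uniqueness. I expect uniqueness to come from the fact that rational functions separate the points of a tropical curve: the relation $\psi(f) = \psi(1) \cdot (f \circ \varphi^{-1})$ determines $f \circ \varphi^{-1}$ for every $f$, hence determines $\varphi^{-1}(y)$ for each $y \in \Gamma_2$ and thus $\varphi$ itself. Granting the pullback relation from Theorem \ref{thm1} and this separation property, both composites collapse to the identity, so $F$ is a bijection on every Hom-set. Being the identity on objects and bijective on morphisms, $F$ is an isomorphism of categories between the two groupoids, which is the assertion $\mathscr{C} \cong \mathscr{D}$.
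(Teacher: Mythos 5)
Your proposal matches the paper's proof essentially step for step: the paper defines the identity-on-objects functors $F$ ($\psi \mapsto \varphi$ via Theorem \ref{thm1}) and $G$ ($\varphi \mapsto (f \mapsto (r \cdot f) \circ \varphi^{-1})$, well defined by Lemma \ref{lem6}), and obtains $G \circ F = \operatorname{id}_{\mathscr{C}}$ from exactly your pullback identity $\psi(f) = (r \cdot f) \circ \varphi^{-1}$, which is the paper's Corollary \ref{cor5}. The one caveat is that this identity is not simply ``extracted'' from the construction of $\varphi$ --- Theorem \ref{thm1} yields it only on chip-firing moves, and Corollary \ref{cor5} extends it to all of $\operatorname{Rat}(\Gamma_1)$ via the finite-generation theorem of \cite{JuAe3} --- while your uniqueness-by-separation argument for $F \circ G = \operatorname{id}_{\mathscr{D}}$ is a correct elaboration of what the paper dismisses as clear.
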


In particular, considering the case that $\Gamma = \Gamma_1 = \Gamma_2$ and $\psi$ is a $\boldsymbol{T}$-algebra automorphism of $\operatorname{Rat}(\Gamma)$, we have the following corollary:

\begin{cor}
	\label{cor2}
The automorphism group of a tropical curve $\Gamma$ is isomorphic to the $\boldsymbol{T}$-algebra automorphism group of $\operatorname{Rat}(\Gamma)$.
\end{cor}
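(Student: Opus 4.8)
The plan is to build an explicit group isomorphism
\[
\Phi : \operatorname{Aut}_{\boldsymbol{T}}(\operatorname{Rat}(\Gamma)) \longrightarrow \operatorname{Aut}(\Gamma),
\]
where the source is the $\boldsymbol{T}$-algebra automorphism group of $\operatorname{Rat}(\Gamma)$ and the target is the automorphism group of $\Gamma$, by sending each $\psi$ to the map $\varphi$ that it induces via Theorem \ref{thm1}. Conceptually this is nothing but the restriction of the groupoid isomorphism $\mathscr{C} \cong \mathscr{D}$ of Corollary \ref{cor1} to the automorphisms at the single object $\Gamma$, together with the observation that the $\boldsymbol{T}$-algebra condition (equivalently $\psi(1) = 1$) on the analytic side corresponds precisely to the expansion factor $r = 1$, i.e. to isometries, on the geometric side. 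Since a $\boldsymbol{T}$-algebra automorphism satisfies $\psi(1) = 1$, Theorem \ref{thm1} guarantees that the induced $\varphi$ is a finite harmonic morphism of degree one, hence an element of $\operatorname{Aut}(\Gamma)$; this shows $\Phi$ is well defined.

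First I would record the precise relation between $\psi$ and its induced map $\varphi$ extracted from the proof of Theorem \ref{thm1}, namely $\psi(f) = f \circ \varphi^{-1}$ for every $f \in \operatorname{Rat}(\Gamma)$ (equivalently $\psi(f) \circ \varphi = f$). With this identity in hand, the homomorphism property of $\Phi$ is a short computation: for $\psi_1, \psi_2$ with induced maps $\varphi_1, \varphi_2$,
\[
(\psi_2 \circ \psi_1)(f) = \psi_2(f \circ \varphi_1^{-1}) = f \circ \varphi_1^{-1} \circ \varphi_2^{-1} = f \circ (\varphi_2 \circ \varphi_1)^{-1},
\]
so the map induced by $\psi_2 \circ \psi_1$ is $\varphi_2 \circ \varphi_1$, giving $\Phi(\psi_2 \circ \psi_1) = \Phi(\psi_2) \circ \Phi(\psi_1)$.

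Next I would construct the inverse of $\Phi$ to obtain bijectivity. Given $\varphi \in \operatorname{Aut}(\Gamma)$, define $\Psi_\varphi : \operatorname{Rat}(\Gamma) \to \operatorname{Rat}(\Gamma)$ by $\Psi_\varphi(f) = f \circ \varphi^{-1}$. Because $\varphi$ is an isomorphism of tropical curves, $f \circ \varphi^{-1}$ is again a rational function, and $\Psi_\varphi$ is a semiring homomorphism fixing the constant subsemifield $\boldsymbol{T}$, hence a $\boldsymbol{T}$-algebra automorphism with inverse $\Psi_{\varphi^{-1}}$. The relation $\psi(f) = f \circ \varphi^{-1}$ then shows that $\varphi \mapsto \Psi_\varphi$ is a two-sided inverse of $\Phi$: injectivity follows because $\Phi(\psi) = \operatorname{id}_\Gamma$ forces $\psi(f) = f$ for all $f$, and surjectivity because $\Phi(\Psi_\varphi) = \varphi$.

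I expect the main obstacle to be not the group-theoretic bookkeeping but pinning down the exact, canonical relation between $\psi$ and $\varphi$ coming out of Theorem \ref{thm1}, and confirming that this assignment is genuinely functorial (compatible with composition) rather than merely a bijection of underlying sets. In particular one must verify that the $\varphi$ produced by Theorem \ref{thm1} is uniquely determined by $\psi$, so that $\Phi$ is well defined as a map of groups, and that $\Psi_\varphi$ reproduces $\psi$ under the correspondence. Once the identity $\psi(f) = f \circ \varphi^{-1}$ is secured, everything else is formal.
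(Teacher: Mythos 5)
Your proposal is correct and takes essentially the same route as the paper: Corollary \ref{cor2} is obtained there by specializing the groupoid isomorphism of Corollary \ref{cor1} (built from Theorem \ref{thm1}, Corollary \ref{cor5}, and Lemma \ref{lem6}) to the automorphisms at the single object $\Gamma$, with the $\boldsymbol{T}$-algebra condition $\psi(1) = 1$ corresponding exactly to expansion factor $r = 1$, i.e.\ to isometries, just as you argue. One caveat: the identity $\psi(f) = f \circ \varphi^{-1}$ does not follow directly from the proof of Theorem \ref{thm1} as you suggest, but is the paper's Corollary \ref{cor5}, whose proof invokes the finite-generation theorem of \cite{JuAe3} (the chip-firing moves of Lemma \ref{lem5} generate $\operatorname{Rat}(\Gamma)$ over $\boldsymbol{T}$); with that ingredient supplied, your argument---including the uniqueness of $\varphi$ and the functoriality check you rightly flag---is complete.
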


Since $\varphi$ is a $\psi(1)$-expansive map $\Gamma_1 \to \Gamma_2$, for any $f \in \operatorname{Rat}(\Gamma_1)$, slopes of $f$ and $\psi(f)$ on intervals corresponding by $\varphi$ coincide.
Hence we obtain the following corollary:

\begin{cor}
	\label{cor3}
Let $\Gamma_1, \Gamma_2$ be tropical curves.
A semiring isormorphism $\psi: \operatorname{Rat}(\Gamma_1) \to \operatorname{Rat}(\Gamma_2)$ and the induced $\psi(1)$-expansive map $\varphi: \Gamma_1 \to \Gamma_2$ map zeros (resp. poles) to zeros (resp. poles) and preserve their degrees.
\end{cor}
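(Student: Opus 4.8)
The plan is to reduce everything to the order function and then invoke the coincidence of slopes already established just before the corollary. Recall that for a tropical curve $\Gamma$ and $f \in \operatorname{Rat}(\Gamma)$, the order of $f$ at a point $p$ is
\[
\operatorname{ord}_p(f) = \sum_{v} \operatorname{slope}_v(f),
\]
where $v$ ranges over the unit tangent directions at $p$ (equivalently, over the edges of $\Gamma$ incident to $p$) and $\operatorname{slope}_v(f)$ denotes the outgoing slope of $f$ along $v$. By definition $p$ is a zero of $f$ precisely when $\operatorname{ord}_p(f) > 0$ and a pole precisely when $\operatorname{ord}_p(f) < 0$, the degrees being $\operatorname{ord}_p(f)$ and $-\operatorname{ord}_p(f)$ respectively. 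Hence it suffices to prove $\operatorname{ord}_p(f) = \operatorname{ord}_{\varphi(p)}(\psi(f))$ for every $p \in \Gamma_1$ and every $f \in \operatorname{Rat}(\Gamma_1)$.

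The key input is the slope-coincidence remarked on in the text: since $\varphi$ is $\psi(1)$-expansive, the outgoing slope of $f$ along an edge $e$ at $p$ equals the outgoing slope of $\psi(f)$ along the corresponding edge $\varphi(e)$ at $\varphi(p)$ (the expansion factor $\psi(1)$ in the metric and the scaling by $\psi(1)$ of values cancel, leaving the integer slopes unchanged). Because $\varphi$ is a homeomorphism restricting to a metric isomorphism on $\Gamma_1 \setminus \Gamma_{1 \infty}$, it carries the tangent directions at $p$ bijectively onto those at $\varphi(p)$ and preserves valence. Summing the equal slopes over these matched directions therefore gives $\operatorname{ord}_p(f) = \operatorname{ord}_{\varphi(p)}(\psi(f))$ at once.

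With the orders identified, the conclusion is immediate: $p$ is a zero (resp.\ pole) of $f$ of degree $d$ if and only if $\varphi(p)$ is a zero (resp.\ pole) of $\psi(f)$ of the same degree $d$, and since the equality of orders holds at every point while $\varphi$ is a bijection, no zeros or poles are created or destroyed. The only point requiring separate attention is a zero or pole located at infinity: there $\varphi$ maps $\Gamma_{1 \infty}$ bijectively onto $\Gamma_{2 \infty}$, and the half-open ray approaching such a point lies in the finite part, on which $\varphi$ is a metric isomorphism with factor $\psi(1)$; hence the slope of $f$ along that ray is again preserved and the order at infinity transfers verbatim. I expect no genuine obstacle here, since the substantive work—showing via Theorem \ref{thm1} that $\psi$ preserves slopes—is already done, and the corollary follows by bookkeeping of the defining slope-sum; the only care needed is the matching of tangent directions at branch points and the correct reading of the order at points at infinity.
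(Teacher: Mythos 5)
Your proposal is correct and takes essentially the same route as the paper, which derives the corollary from precisely the slope-coincidence you invoke; the only refinement worth making is that the pointwise value scaling in your cancellation step is the content of Corollary~\ref{cor5} ($\psi(f) = (r \cdot f) \circ \varphi^{-1}$) rather than of Theorem~\ref{thm1} alone, so that is the statement to cite there. Your explicit bookkeeping --- matching tangent directions at branch points, summing equal slopes, and checking the sign convention at points at infinity --- merely spells out what the paper leaves implicit in the sentence preceding the corollary.
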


Note that we can restrict ourselves to $\boldsymbol{T}$-algebra isomorphism setting.
In this setting, $\psi(1)$ is always one, and thus we need not deal with $r$-expansive maps.

Here we note the work in \cite{Jun} related to the question $(2)$ above.
In \cite{Jun}, Jun deals with three kinds of valuations of semirings, \textit{classical}, \textit{strict} and \textit{hyperfield valuations}, and has constructed the abstract curve associated to the rational function semifield over $(\boldsymbol{Q} \cup \{ -\infty \}, \operatorname{max}, +)$ via strict valuations.

The rest of this paper is organized as follows.
In Section \ref{section2}, we give the definitions of semirings and algebras, tropical curves, rational functions and chip-firing moves on tropical curves, and finite harmonic morphisms between tropical curves.
Section \ref{section3} contains proofs of all the assertions above.
In that section, we also describe all semiring automorphisms of rational function semifields of all tropical curves; for all tropical curves $\Gamma$ except star-shaped tropical curves consisting of a finite number (at least one) of $[0, \infty]$, the semiring automorphism group of $\operatorname{Rat}(\Gamma)$ coincides with the $\boldsymbol{T}$-algebra automorphism group of $\operatorname{Rat}(\Gamma)$ (Lemma \ref{lem7} and Corollary \ref{cor6}).

\section*{Acknowledgements}
The author thanks my supervisor Masanori Kobayashi, Yuki Kageyama, Yasuhito Nakajima, Ken Sumi, and Daichi Miura for helpful comments.
This work was supported by JSPS KAKENHI Grant Number 20J11910.

\section{Preliminaries}
	\label{section2}

In this section, we recall several definitions which we need later.
We refer to \cite{Golan} (resp. \cite{Maclagan=Sturmfels}) for an introduction to the theory of semirings (resp. tropical geometry) and employ definitions in \cite{Jun} (resp. \cite{JuAe}) related to semirings (resp. tropical curves).

\subsection{Semirings and algebras}
	\label{subsection2.1}

In this paper, a \textit{semiring} is a commutative semiring with the absorbing neutral element $0$ for addition and the identity $1$ for multiplication such that $0 \not= 1$.
If every nonzero element of a semiring $S$ is multiplicatively invertible, then $S$ is called a \textit{semifield}.
A semiring $S$ is \textit{additively idempotent} if $x + x = x$ for any $x \in S$.
An additively idempotent semiring $S$ has a natural partial order, i.e., for $x, y \in S$, $x \ge y$ if and only if $x + y = x$.

A map $\varphi : S_1 \to S_2$ between semirings is a \textit{semiring homomorphism} if for any $x, y \in S_1$,
\[
\varphi(x + y) = \varphi(x) + \varphi(y), \	\varphi(x \cdot y) = \varphi(x) \cdot \varphi(y), \	\varphi(0) = 0, \	\text{and}\	\varphi(1) = 1.
\]
A semiring homomorphism $\varphi : S_1 \to S_2$ is a \textit{semiring isomorphism} if $\varphi$ is bijective.
A \textit{semiring automorphism} of $S$ is a semiring isomorphism $S \to S$.

Given a semiring homomorphism $\varphi : S_1 \to S_2$, we call the pair $(S_2, \varphi)$ (for short, $S_2$) a \textit{$S_1$-algebra}.
For a semiring $S_1$, a map $\psi : (S_2, \varphi) \to (S_2^{\prime}, \varphi^{\prime})$ between $S_1$-algebras is a \textit{$S_1$-algebra homomorphism} if $\psi$ is a semiring homomorphism and $\varphi^{\prime} = \psi \circ \varphi$.
When there is no confusion, we write $\psi : S_2 \to S_2^{\prime}$ simply.
A bijective $S_1$-algebra homomorphism $S_2 \to S_2^{\prime}$ is a \textit{$S_1$-algebra isomorphism}.
In addition, if $S_2 = S_2^{\prime}$, then it is a \textit{$S_1$-algebra automorphism}.

The set $\boldsymbol{T} := \boldsymbol{R} \cup \{ -\infty \}$ with two tropical operations:
\[
a \oplus b := \operatorname{max}\{ a, b \} \quad	\text{and} \quad a \odot b := a + b,
\]
where both $a$ and $b$ are in $\boldsymbol{T}$, becomes a semifield.
Here, for any $a \in \boldsymbol{T}$, we handle $-\infty$ as follows:
\[
a \oplus (-\infty) = (-\infty) \oplus a = a \quad \text{and} \quad a \odot (-\infty) = (-\infty) \odot a = -\infty.
\]
$\boldsymbol{T}$ is called the \textit{tropical semifield}.

\subsection{Tropical curves}
	\label{subsection2.2}

In this paper, a \textit{graph} is an unweighted, undirected, finite, connected nonempty multigraph that may have loops.
For a graph $G$, the set of vertices is denoted by $V(G)$ and the set of edges by $E(G)$.
The \textit{valence} of a vertex $v$ of $G$ is the number of edges incident to $v$, where each loop is counted twice.
A vertex $v$ of $G$ is a \textit{leaf end} if $v$ has valence one.
A \textit{leaf edge} is an edge of $G$ incident to a leaf end.

An \textit{edge-weighted graph} $(G, l)$ is the pair of a graph $G$ and a function $l: E(G) \to {\boldsymbol{R}}_{>0} \cup \{\infty\}$, where $l$ can take the value $\infty$ on only leaf edges.
A \textit{tropical curve} is the underlying topological space of an edge-weighted graph $(G, l)$ together with an identification of each edge $e$ of $G$ with the closed interval $[0, l(e)]$.
The interval $[0, \infty]$ is the one point compactification of the interval $[0, \infty)$.
We regard $[0, \infty]$ not just as a topological space but as almost a metric space.
The distance between $\infty$ and any other point is infinite.
When $l(e)=\infty$, the leaf end of $e$ must be identified with $\infty$.
If $E(G) = \{ e \}$ and $l(e)=\infty$, then we can identify either leaf ends of $e$ with $\infty$.
When a tropical curve $\Gamma$ is obtained from $(G, l)$, the edge-weighted graph $(G, l)$ is called a \textit{model} for $\Gamma$.
There are many possible models for $\Gamma$.
A model $(G, l)$ is \textit{loopless} if $G$ is loopless.
For a point $x$ of a tropical curve $\Gamma$, if $x$ is identified with $\infty$, then $x$ is called a \textit{point at infinity}, else, $x$ is called a \textit{finite point}.
$\Gamma_{\infty}$ denotes the set of all points at infinity of $\Gamma$.
If $x$ is a finite point, then the \textit{valence} $\operatorname{val}(x)$ is the number of connected components of $U \setminus \{ x \}$ with any sufficiently small connected neighborhood $U$ of $x$, if $x$ is a point at infinity, then $\operatorname{val}(x) := 1$.
Remark that this ``valence" is defined for a point of a tropical curve and the ``valence" in the first paragraph of this subsection is defined for a vertex of a graph, and these are compatible with each other.
We construct a model $(G_{\circ}, l_{\circ})$ called the {\it canonical model} for $\Gamma$ as follows.
Generally, we determine $V(G_{\circ}) := \{ x \in \Gamma \,|\, \operatorname{val}(x) \not= 2 \}$ except the following two cases.
When $\Gamma$ is homeomorphic to a circle $S^1$, we determine $V(G_{\circ})$ as the set consisting of one arbitrary point of $\Gamma$.
When $\Gamma$ has the edge-weighted graph $(T, l)$ as its model, where $T$ is a tree consisting of three vertices and two edges and $l(E(T)) = \{ \infty \}$, we determine $V(G_{\circ})$ as the set of two points at infinity and any finite point of $\Gamma$.
The \textit{genus} $g(\Gamma)$ of $\Gamma$ is the first Betti number of $\Gamma$, which coincides with $\# E(G) - \# V(G) + 1$ for \textit{any} model $(G, l)$ for $\Gamma$.
For a point $x$ of $\Gamma$, a \textit{half-edge} of $x$ is a connected component of $U \setminus \{ x \}$ with any connected neighborhood $U$ of $x$ which consists of only two valent points and $x$.
We frequently identify a vertex (resp. an edge) of $G$ with the corresponding point (resp. the corresponding closed subset) of $\Gamma$.
The word ``an edge of $\Gamma$" means an edge of $G_{\circ}$.

\subsection{Rational functions and chip-firing moves}
	\label{subsection2.3}

Let $\Gamma$ be a tropical curve.
A continuous map $f : \Gamma \to \boldsymbol{R} \cup \{ \pm \infty \}$ is a \textit{rational function} on $\Gamma$ if $f$ is a constant function of $-\infty$ or a piecewise affine function with integer slopes, with a finite number of pieces and that can take the value $\pm \infty$ at only points at infinity.
For a point $x$ of $\Gamma$ and a rational function $f \in \operatorname{Rat}(\Gamma) \setminus \{ -\infty \}$, $x$ is a \textit{zero} (resp. \textit{pole}) of $f$ if the sign of the sum of outgoing slopes of $f$ at $x$ is plus (resp. minus).
The absolute value of the sum is its \textit{degree}.
If $x$ is a point at infinity, then we regard the outgoing slope of $f$ at $x$ as the slope of $f$ from $y$ to $x$ times minus one, where $y$ is a finite point on the leaf edge incident to $x$ such that $f$ has a constant slope on the interval $(y, x)$.
$\operatorname{Rat}(\Gamma)$ denotes the set of all rational functions on $\Gamma$.
For rational functions $f, g \in \operatorname{Rat}(\Gamma)$ and a point $x \in \Gamma \setminus \Gamma_{\infty}$, we define
\[
(f \oplus g) (x) := \operatorname{max}\{f(x), g(x)\} \quad \text{and} \quad (f \odot g) (x) := f(x) + g(x).
\]
We extend $f \oplus g$ and $f \odot g$ to points at infinity to be continuous on whole $\Gamma$.
Then both are rational functions on $\Gamma$.
Note that for any $f \in \operatorname{Rat}(\Gamma)$, we have
\[
f \oplus (-\infty) = (-\infty) \oplus f = f
\]
and
\[
f \odot (-\infty) = (-\infty) \odot f = -\infty.
\]
Then $\operatorname{Rat}(\Gamma)$ becomes a semifield with these two operations.
Also, $\operatorname{Rat}(\Gamma)$ becomes a $\boldsymbol{T}$-algebra with the natural inclusion $\boldsymbol{T} \hookrightarrow \operatorname{Rat}(\Gamma)$.
Let $\operatorname{Aut}_{\rm semiring}(\operatorname{Rat}(\Gamma))$ (resp. $\operatorname{Aut}_{\boldsymbol{T}}(\operatorname{Rat}(\Gamma))$) denote the set of all semiring (resp. $\boldsymbol{T}$-algebra) automorphisms of $\operatorname{Rat}(\Gamma)$.
Then both $\operatorname{Aut}_{\rm semiring}(\operatorname{Rat}(\Gamma))$ and $\operatorname{Aut}_{\boldsymbol{T}}(\operatorname{Rat}(\Gamma))$ have a group structure.
Note that for $f, g \in \operatorname{Rat}(\Gamma)$, $f = g$ means that $f(x) = g(x)$ for any $x \in \Gamma$.

A \textit{subgraph} of a tropical curve is a compact subset of the tropical curve with a finite number of connected components.
Let $\Gamma_1$ be a subgraph of a tropical curve $\Gamma$ which has no connected components consisting of only a point at infinity, and $l$ a positive number or infinity.
The \textit{chip-firing move} by $\Gamma_1$ and $l$ is defined as the rational function $\operatorname{CF}(\Gamma_1; l)(x) := - \operatorname{min}(l, \operatorname{dist}(x, \Gamma_1))$ with $x \in \Gamma$.

\subsection{Finite harmonic morphisms}
	\label{subsection2.4}

Let $\varphi : \Gamma \to \Gamma^{\prime}$ be a continuous map between tropical curves.
$\varphi$ is a \textit{finite harmonic morphism} if there exist loopless models $(G, l)$ and $(G^{\prime}, l^{\prime})$ for $\Gamma$ and $\Gamma^{\prime}$, respectively, such that $(1)$ $\varphi(V(G)) \subset V(G^{\prime})$ holds, $(2)$ $\varphi(E(G)) \subset E(G^{\prime})$ holds, $(3)$ for any edge $e$ of $G$, there exists a positive integer $\operatorname{deg}_e(\varphi)$ such that for any points $x, y$ of $e$, $\operatorname{dist}(\varphi (x), \varphi (y)) = \operatorname{deg}_e(\varphi) \cdot \operatorname{dist}(x, y)$ holds, and $(4)$ for every vertex $v$ of $G$, the sum $\sum_{e \in E(G):\, e \mapsto e^{\prime},\, v \in e} \operatorname{deg}_e(\varphi)$ is independent of the choice of $e^{\prime} \in E(G^{\prime})$ incident to $\varphi(v)$.
This sum is denoted by $\operatorname{deg}_v(\varphi)$.
Then, the sum $\sum_{v \in V(G):\, v \mapsto v^{\prime}} \operatorname{deg}_v(\varphi)$ is independent of the choice of a vertex $v^{\prime}$ of $G^{\prime}$, and is called the \textit{degree} of $\varphi$, written by $\operatorname{deg}(\varphi)$.
If both $\Gamma$ and $\Gamma^{\prime}$ are singletons, we regard $\varphi$ as a finite harmonic morphism that can have any positive integer as its degree.
Note that if $\varphi \circ \psi$ is a composition of finite harmonic morphisms, then it is also a finite harmonic morphism of degree $\operatorname{deg}(\varphi) \cdot \operatorname{deg}(\psi)$, and thus tropical curves and finite harmonic morphisms between them make a category.
$\operatorname{Aut}(\Gamma)$ denotes the set of all \textit{automorphisms} of $\Gamma$, i.e., finite harmonic morphisms of degree one $\Gamma \to \Gamma$.

\begin{rem}
	\label{rem1}
\upshape{
$\operatorname{Aut}(\Gamma)$ coincides with the set of all continuous maps $\Gamma \to \Gamma$ whose restrictions on $\Gamma \setminus \Gamma_{\infty}$ are isometries $\Gamma \setminus \Gamma_{\infty} \to \Gamma \setminus \Gamma_{\infty}$. 
}
\end{rem}

\section{Main results}
	\label{section3}

In this section, we will prove all assertions in Section \ref{section1}.

We start our consideration from semiring automorphisms of $\boldsymbol{T}$.

\begin{prop}
	\label{prop0}
For $r \in \boldsymbol{R}_{>0}$, the $r$-expansive map $\boldsymbol{T} \to \boldsymbol{T}; t \mapsto r \cdot t$ is a semiring automorphism.
\end{prop}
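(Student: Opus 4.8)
The plan is to verify directly that the map $\psi_r : \boldsymbol{T} \to \boldsymbol{T};\ t \mapsto r \cdot t$ satisfies the four defining conditions of a semiring homomorphism and then to produce an explicit inverse. The only genuine care needed is bookkeeping: in $\boldsymbol{T}$ the additive neutral element (the semiring $0$) is $-\infty$, while the multiplicative identity (the semiring $1$) is the real number $0$, so I must keep these distinct from the ordinary real numbers appearing in the product $r \cdot t$.

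First I would check compatibility with $\oplus$. For $a, b \in \boldsymbol{R}$, since $r > 0$, multiplication by $r$ is strictly increasing, so $r \cdot \operatorname{max}\{a, b\} = \operatorname{max}\{r \cdot a, r \cdot b\}$, i.e. $\psi_r(a \oplus b) = \psi_r(a) \oplus \psi_r(b)$; the case where $a$ or $b$ equals $-\infty$ holds because $r \cdot (-\infty) = -\infty$ by convention and $-\infty$ is absorbing for $\operatorname{max}$. Next I would check compatibility with $\odot$: for $a, b \in \boldsymbol{R}$ this is exactly the distributive law $r \cdot (a + b) = r \cdot a + r \cdot b$, and when either argument is $-\infty$ both sides evaluate to $-\infty$. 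The remaining two conditions are immediate: $\psi_r(-\infty) = r \cdot (-\infty) = -\infty$ sends the semiring $0$ to itself, and $\psi_r(0) = r \cdot 0 = 0$ sends the semiring $1$ to itself.

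Finally, for bijectivity I would exhibit $\psi_{1/r}$ as a two-sided inverse. Since $1/r \in \boldsymbol{R}_{>0}$ the map $\psi_{1/r}$ is well defined, and $\psi_{1/r} \circ \psi_r = \psi_r \circ \psi_{1/r} = \operatorname{id}_{\boldsymbol{T}}$ holds on all of $\boldsymbol{R}$ by associativity of real multiplication and at $-\infty$ by the convention $r \cdot (-\infty) = -\infty$. Hence $\psi_r$ is a semiring automorphism.

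There is no real obstacle here; the statement is essentially a restatement of the fact that scaling by a positive real is an order-preserving dilation of $\boldsymbol{R}$ fixing $0$, transported through the dictionary that identifies ordinary $(\operatorname{max}, +)$ with the tropical operations $(\oplus, \odot)$. The only place one could slip is forgetting to treat $-\infty$ as a separate case or misidentifying which element plays the role of the semiring $0$ and which plays the role of the semiring $1$.
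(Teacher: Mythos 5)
Your proof is correct and matches the paper's approach: the paper dismisses this as ``straightforward,'' and your direct verification of the four homomorphism conditions (with the $-\infty$ conventions handled separately) together with the explicit inverse $\psi_{1/r}$ is exactly the routine check being alluded to. No gaps.
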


\begin{proof}
The proof is straightforward.
\end{proof}

Does the converse of Proposition \ref{prop0} hold?
The answer is yes.

\begin{lemma}
	\label{lem2}
Let $S_1, S_2$ be additively idempotent semirings.
A semiring homomorphism $f : S_1 \to S_2$ is order preserving.
\end{lemma}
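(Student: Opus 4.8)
The plan is to work directly from the definition of the natural partial order on an additively idempotent semiring. Recall that for $x, y$ in such a semiring, the relation $x \ge y$ is defined to mean $x + y = x$. So to show that $f$ is order preserving, I would start by fixing $x, y \in S_1$ with $x \ge y$, which by definition unwinds to the single equation $x + y = x$ in $S_1$, and then aim to deduce $f(x) + f(y) = f(x)$ in $S_2$, which is exactly the statement $f(x) \ge f(y)$ under the corresponding order on $S_2$.

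The key step is simply to apply $f$ to both sides of $x + y = x$ and use that $f$ respects addition. Concretely, I would write $f(x) = f(x + y) = f(x) + f(y)$, where the first equality is $f$ applied to the hypothesis and the second is the additivity clause in the definition of a semiring homomorphism. Reading the resulting identity $f(x) + f(y) = f(x)$ back through the definition of the order on $S_2$ gives $f(x) \ge f(y)$, as desired.

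I do not expect any genuine obstacle here: the argument is a one-line unwinding of the definition of the induced partial order together with the additivity of a semiring homomorphism, and it uses neither idempotency beyond what is needed to make the order well defined, nor any property of multiplication or the distinguished elements $0, 1$. The only point worth stating carefully is that both $S_1$ and $S_2$ are additively idempotent, so that the relation ``$\ge$'' makes sense on each side and the same characterization via the addition is available in the target as in the source.
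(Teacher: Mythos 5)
Your proof is correct and is precisely the one-line argument the paper has in mind when it dismisses the proof as straightforward: apply $f$ to $x + y = x$ and use additivity to get $f(x) + f(y) = f(x)$, i.e., $f(x) \ge f(y)$. Your remark that idempotency is needed only to make the partial order well defined on both sides is also accurate.
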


\begin{proof}
The proof is straightforward.
\end{proof}

\begin{prop}
	\label{prop1}
For a semiring automorphism $\psi : \boldsymbol{T} \to \boldsymbol{T}$, $\psi(1) \in \boldsymbol{R}_{>0}$ and $\psi$ is the $\psi(1)$-expansive map $\boldsymbol{T} \to \boldsymbol{T}; t \mapsto \psi(1) \cdot t$.
\end{prop}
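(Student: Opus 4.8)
The plan is to show that $\psi$ restricts to a strictly increasing additive bijection of $\boldsymbol{R}$ fixing the real number $0$, and then to invoke the classical rigidity of monotone solutions of Cauchy's functional equation.

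First I would record the structural facts forced by $\psi$ being a semiring homomorphism. Since the semiring addition of $\boldsymbol{T}$ is $\oplus = \operatorname{max}$, its additive neutral element is $-\infty$, and since its semiring multiplication is $\odot = +$, its multiplicative identity is the real number $0$. Hence $\psi(-\infty) = -\infty$ and $\psi(0) = 0$, while the multiplicativity relation $\psi(a \odot b) = \psi(a) \odot \psi(b)$ becomes the additivity relation $\psi(a + b) = \psi(a) + \psi(b)$ for all $a, b \in \boldsymbol{R}$. As $\psi$ is bijective and fixes $-\infty$, it restricts to a bijection of $\boldsymbol{R}$; together with additivity and $\psi(0) = 0$ this makes $\psi|_{\boldsymbol{R}}$ an automorphism of the additive group $(\boldsymbol{R}, +)$.

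Next I would pin down monotonicity. The semiring $\boldsymbol{T}$ is additively idempotent, and its natural partial order coincides with the usual total order on $\boldsymbol{R} \cup \{ -\infty \}$, so by Lemma \ref{lem2} the homomorphism $\psi$ is order preserving. Combined with injectivity, this forces $\psi|_{\boldsymbol{R}}$ to be strictly increasing. In particular, writing $r := \psi(1)$, we obtain $r > \psi(0) = 0$, so $r \in \boldsymbol{R}_{>0}$, which is the first assertion.

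Finally I would derive linearity. From additivity one gets $\psi(n) = nr$ for every integer $n$, and then $\psi(q) = qr$ for every rational $q$ (via $n\,\psi(m/n) = \psi(m) = mr$). For arbitrary $t \in \boldsymbol{R}$, squeezing $t$ between rationals $q_1 \le t \le q_2$ and using monotonicity gives $q_1 r \le \psi(t) \le q_2 r$; letting $q_1 \uparrow t$ and $q_2 \downarrow t$ yields $\psi(t) = rt$. Since moreover $\psi(-\infty) = -\infty = r \cdot (-\infty)$ under the stated convention, $\psi$ is exactly the $\psi(1)$-expansive map $t \mapsto \psi(1) \cdot t$. The only genuinely nontrivial point is this last step, namely excluding the pathological (nonmeasurable, dense-graph) additive maps of $\boldsymbol{R}$; but the order preservation supplied by Lemma \ref{lem2} disposes of it cleanly through the density of $\boldsymbol{Q}$ in $\boldsymbol{R}$.
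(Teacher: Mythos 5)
Your proof is correct, and it takes a genuinely more direct route than the paper's. Both arguments ultimately rest on the same two ingredients --- the order preservation supplied by Lemma \ref{lem2} and the integer homogeneity $\psi(n \cdot t) = \psi(t^{\odot n}) = n \cdot \psi(t)$ coming from multiplicativity --- but you organize them as the classical rigidity statement for Cauchy's functional equation: you pin down $\psi(0) = 0$ and $\psi(-\infty) = -\infty$ (the semiring identity and absorbing element, which the paper leaves implicit), compute $\psi(q) = q \cdot r$ on all of $\boldsymbol{Q}$, and then sandwich an arbitrary real between rationals using strict monotonicity. The paper instead argues by contradiction: assuming some ratio $\frac{t^{\prime}}{t}$ is not preserved, it reduces by sign analysis to $0 < t < t^{\prime}$, sets $a = \frac{t^{\prime}}{t}$ and $b = \frac{\psi(t^{\prime})}{\psi(t)}$, interpolates a rational $c$ with $b < c < a$, and uses an Archimedean trick (choosing $m$ with $m \cdot c \in \boldsymbol{Z}_{\ge 1}$) to force $\psi(c \cdot t) = c \cdot \psi(t)$ and hence $c < b$, a contradiction. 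Your direct sandwich buys a shorter argument that avoids the paper's case analysis on the signs of $t, t^{\prime}$ (which the paper needs only because it works with ratios), and it makes transparent exactly which pathology --- nonmeasurable additive maps --- is being excluded and by what (monotonicity plus density of $\boldsymbol{Q}$). One small remark: your appeal to full additivity of $\psi|_{\boldsymbol{R}}$ is slightly more than you use; as in the paper, the integer homogeneity $\psi(n \cdot t) = n \cdot \psi(t)$ already suffices for the rational step $n \, \psi(m/n) = \psi(m) = m r$, so the two proofs draw on identical algebraic input despite the different packaging.
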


\begin{proof}
Since $\psi$ is a semiring homomorphim, for any $n \in \boldsymbol{Z} \setminus \{ 0 \}$ and $t \in \boldsymbol{R}$, we have $\psi(n \cdot t) = \psi(t^{\odot n}) = \psi(t)^{\odot n} = n \cdot \psi(t)$.

Assume that for any $r \in \boldsymbol{R}_{>0}$, $\psi$ is not the $r$-expansive map.
Then, there exist $t < t^{\prime} \in \boldsymbol{R} \setminus \{ 0 \}$ such that $\frac{t^{\prime}}{t} \not= \frac{\psi(t^{\prime})}{\psi(t)}$.
Note that since $t \not= 0$, so is $\psi(t)$.

Suppose that $t < t^{\prime} < 0$.
By Lemma \ref{lem2}, we have $\psi(t) < \psi(t^{\prime}) < 0$.
Thus the signs of ratios $\frac{t^{\prime}}{t}$ and $\frac{\psi(t^{\prime})}{\psi(t)}$ coincide.
Since $\frac{t^{\prime}}{t} = \frac{-t^{\prime}}{-t}$ and $\frac{\psi(t^{\prime})}{\psi(t)} = \frac{-\psi(t^{\prime})}{-\psi(t)} = \frac{\psi(t^{\prime})^{\odot(-1)}}{\psi(t)^{\odot(-1)}} = \frac{\psi(-t^{\prime})}{\psi(-t)}$, we have $\frac{-t^{\prime}}{-t} \not= \frac{\psi(-t^{\prime})}{\psi(-t)}$.
Hence by replacing $t, t^{\prime}$ with $-t, -t^{\prime}$, respectively, we can assume that $0 < t < t^{\prime}$ in this case.

Suppose that $t < 0 < t^{\prime}$.
If $\frac{t^{\prime}}{t} = -1$, then $t^{\prime} = -t$.
Therefore, $\frac{\psi(t^{\prime})}{\psi(t)} = \frac{\psi(-t)}{\psi(t)} = \frac{-\psi(t)}{\psi(t)} = -1$, and this is a contradiction.
Thus $t^{\prime} \not= -t$, and $0 < -t < t^{\prime}$ or $0 < t^{\prime} < -t$.
Hence $\frac{t^{\prime}}{-t} = - \frac{t^{\prime}}{t} \not= - \frac{\psi(t^{\prime})}{\psi(t)} = \frac{\psi(t^{\prime})}{-\psi(t)} = \frac{\psi(t^{\prime})}{\psi(-t)}$.
From these, it is enough to consider when $0 < t < t^{\prime}$.

Assume that $0 < t < t^{\prime}$.
Let $a:= \frac{t^{\prime}}{t}$.
Then, $a > 1$.
Since $0 < \psi(t) < \psi(t^{\prime})$ by Lemma \ref{lem2}, if $b$ denotes the ratio $\frac{\psi(t^{\prime})}{\psi(t)}$, then $b > 1$.
By assumption, $a \not= b$.

Assume that $a > b$.
By the continuity of real numbers, there exists a rational number $c \in \boldsymbol{Q}_{>1}$ such that $1 < b < c < a$.
By the Archimedean property of the real numbers, there exists a positive integer $m \in \boldsymbol{Z}_{\ge 1}$ such that $m \cdot c > a$.
We can assume that $m \cdot c \in \boldsymbol{Z}_{\ge 1}$.
Since $t > 0$, we have 
\[
0 < t < b \cdot t < c \cdot t < a \cdot t = t^{\prime} < m \cdot c \cdot t.
\]
By Lemma \ref{lem2}, we have
\begin{eqnarray*}
0 &<& \psi(t) < \psi(b \cdot t) < \psi(c \cdot t) < \psi(a \cdot t) = \psi(t^{\prime}) = b \cdot \psi(t)\\
 &<& \psi(m \cdot c \cdot t) = m \cdot \psi(c \cdot t) = m \cdot c \cdot \psi(t).
\end{eqnarray*}
From these, we have $\psi(c \cdot t) = c \cdot \psi(t)$ and $c \cdot \psi(t) < b \cdot \psi(t)$.
Since $0 < \psi(t)$, this means that $c < b$, a contradiction.
Hence $a \ge b$.
By a similar argument, we have $a \le b$.
Therefore, $a = b$ holds but this contradicts $a \not= b$.
Thus there are no such $t < t^{\prime}$.
This means that for any $t < t^{\prime} \in \boldsymbol{R} \setminus \{ 0 \}$, $\frac{t^{\prime}}{t} = \frac{\psi(t^{\prime})}{\psi(t)}$, and thus $\frac{\psi(t)}{t} = \frac{\psi(t^{\prime})}{t^{\prime}}$.
Hence $\frac{\psi(t)}{t} = \frac{\psi(1)}{1} > 0$.
In conclusion, $\psi$ is the $\psi(1)$-expansive map.
\end{proof}

\begin{cor}
	\label{cor4}
The semiring automorphism group of $\boldsymbol{T}$ is isomorphic to the group consisting of the $r$-expansive maps of $\boldsymbol{T}$ for all $r \in \boldsymbol{R}_{>0}$.
\end{cor}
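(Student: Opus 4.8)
The plan is to deduce the corollary directly from Propositions \ref{prop0} and \ref{prop1}, which together pin down the underlying set of the semiring automorphism group of $\boldsymbol{T}$ exactly. First I would note that, by Proposition \ref{prop1}, every semiring automorphism $\psi$ of $\boldsymbol{T}$ equals the $\psi(1)$-expansive map with $\psi(1) \in \boldsymbol{R}_{>0}$; conversely, by Proposition \ref{prop0}, every $r$-expansive map with $r \in \boldsymbol{R}_{>0}$ is a semiring automorphism. Hence the set of semiring automorphisms of $\boldsymbol{T}$ and the set of $r$-expansive maps ($r \in \boldsymbol{R}_{>0}$) coincide.

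Next I would check that the group operations agree. Both groups carry composition of maps as their operation, and for $r, s \in \boldsymbol{R}_{>0}$ the composite of the $r$-expansive map and the $s$-expansive map sends $t \mapsto r \cdot (s \cdot t) = (rs) \cdot t$, i.e., is the $(rs)$-expansive map (this identity also holds at $t = -\infty$ under the convention $r \cdot (-\infty) = -\infty$). Therefore the two groups are identical as groups, and the identity map is the desired isomorphism.

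For a more concrete isomorphism, I would also record the map $\psi \mapsto \psi(1)$ from the semiring automorphism group to $(\boldsymbol{R}_{>0}, \cdot)$. By Proposition \ref{prop1} this map is well defined and bijective, and the computation $(\psi \circ \phi)(1) = \psi(\phi(1)) = \psi(1) \cdot \phi(1)$---using that $\psi$ is the $\psi(1)$-expansive map---shows it is a group homomorphism. There is no real obstacle here: essentially all of the content has already been absorbed into Proposition \ref{prop1}, and what remains is only the verification that composition of expansive maps corresponds to multiplication of expansion factors.
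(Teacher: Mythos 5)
Your proposal is correct and matches the paper's (implicit) argument: the corollary is stated without proof precisely because, as you observe, Propositions \ref{prop0} and \ref{prop1} together show the two groups have the same underlying set with composition as the common operation, so the identity map is an isomorphism. Your additional remark that $\psi \mapsto \psi(1)$ gives an isomorphism onto $(\boldsymbol{R}_{>0}, \cdot)$ is a harmless and accurate refinement.
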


In what follows, we will consider semiring isomorphisms between rational function semifields of tropical curves.

\begin{lemma}
	\label{lem3}
Let $\Gamma_1, \Gamma_2$ be tropical curves.
If $\psi : \operatorname{Rat}(\Gamma_1) \to \operatorname{Rat}(\Gamma_2)$ is a semiring isomorphism, then $\psi(\boldsymbol{T}) = \boldsymbol{T}$.
\end{lemma}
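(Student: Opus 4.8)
The plan is to give a purely multiplicative, semiring-theoretic characterization of the constant subsemifield $\boldsymbol{T} \subset \operatorname{Rat}(\Gamma)$ and then observe that any semiring isomorphism must preserve it. Since $\psi$ is a semiring isomorphism, it fixes the additive identity, i.e. $\psi(-\infty) = -\infty$, and it carries multiplicatively invertible elements to invertible elements. In the semifield $\operatorname{Rat}(\Gamma)$ every element other than $-\infty$ is invertible, so $\psi$ restricts to a group isomorphism
\[
\psi : (\operatorname{Rat}(\Gamma_1) \setminus \{ -\infty \}, \odot) \longrightarrow (\operatorname{Rat}(\Gamma_2) \setminus \{ -\infty \}, \odot).
\]
It therefore suffices to single out $\boldsymbol{T} \setminus \{ -\infty \}$ inside this group by a property invariant under group isomorphisms.

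The property I would use is divisibility. Call $a \in \operatorname{Rat}(\Gamma) \setminus \{ -\infty \}$ \emph{divisible} if for every positive integer $n$ there is some $b$ with $b^{\odot n} = a$ (equivalently $n b = a$ pointwise). I claim the divisible elements are exactly the real constants, that is $\boldsymbol{T} \setminus \{ -\infty \}$. For a real constant $c$ the function $c/n$ is again a constant, hence lies in $\operatorname{Rat}(\Gamma)$ and satisfies $(c/n)^{\odot n} = c$, so constants are divisible. Conversely, the group $(\operatorname{Rat}(\Gamma) \setminus \{ -\infty \}, \odot)$ is torsion-free, so an $n$-th root, if it exists, is unique and must equal the pointwise quotient $a/n$; as every rational function has integer slopes, the requirement $a/n \in \operatorname{Rat}(\Gamma)$ forces every slope of $a$ to be divisible by $n$. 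If $a$ is divisible this holds for all $n$, so every slope of $a$ is $0$, and since $\Gamma$ is connected, $a$ is constant.

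Finally, divisibility is preserved in both directions by the group isomorphism $\psi$: from $a = b^{\odot n}$ we get $\psi(a) = \psi(b)^{\odot n}$, and surjectivity of $\psi$ gives the reverse implication. Hence $\psi$ maps the divisible elements of $\operatorname{Rat}(\Gamma_1)$ onto those of $\operatorname{Rat}(\Gamma_2)$, i.e. $\psi(\boldsymbol{T} \setminus \{ -\infty \}) = \boldsymbol{T} \setminus \{ -\infty \}$, and combined with $\psi(-\infty) = -\infty$ this yields $\psi(\boldsymbol{T}) = \boldsymbol{T}$. The only real work lies in the characterization of divisible elements; the step needing care is the uniqueness of roots together with the integer-slope constraint, and one should also confirm the argument is unaffected by the values $\pm\infty$ that rational functions may take at points at infinity, since the slope obstruction already occurs on the finite part of any non-constant function.
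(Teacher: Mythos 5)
Your proof is correct and takes essentially the same approach as the paper: the paper likewise uses the existence of the $n$-th roots $t/n$ of constants together with the integer-slope constraint (each slope of $\psi(t/n)$ is $\frac{1}{n}$ times that of $\psi(t)$, so $\psi(t)$ must be constant), and then argues symmetrically for $\psi^{-1}$. Your packaging of this as an isomorphism-invariant divisibility characterization of $\boldsymbol{T} \setminus \{-\infty\}$ in the multiplicative group is just a cleaner framing of the identical core computation.
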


\begin{proof}
Let $t \in \boldsymbol{T}$ and $n \in \boldsymbol{Z}_{>0}$.
Then $\left( \frac{t}{n} \right)^{\odot n} = t$.
Thus we have
\[
\psi(t) = \psi \left( \left(\frac{t}{n} \right)^{\odot n} \right) = \psi \left(\frac{t}{n} \right)^{\odot n} = n \cdot \psi \left(\frac{t}{n} \right).
\]
Therefore each slope of $\psi \left(\frac{t}{n} \right)$ is $\frac{1}{n}$ times that of $\psi(t)$.
Since each rational function on a tropical curve has only integer slopes, $\psi(t)$ must be in $\boldsymbol{T}$.
Thus $\psi(\boldsymbol{T}) \subset \boldsymbol{T}$.
Similarly, we have $\psi^{-1}(\boldsymbol{T}) \subset \boldsymbol{T}$.
Hence $\psi(\boldsymbol{T}) = \boldsymbol{T}$.
\end{proof}

\begin{rem}
	\label{rem2}
\upshape{
By Proposition \ref{prop1} and Lemma \ref{lem3}, the restriction $\psi|_{\boldsymbol{T}}$ is the $\psi(1)$-expansive map $\boldsymbol{T} \to \boldsymbol{T}$, where $\psi$ is as in Lemma \ref{lem3}.
In particular, if $\psi$ is a $\boldsymbol{T}$-algebra isomorphism, then $\psi(1) = 1$.
}
\end{rem}

Let $\Gamma_1, \Gamma_2$ be tropical curves and $\psi : \operatorname{Rat}(\Gamma_1) \to \operatorname{Rat}(\Gamma_2)$ a semiring isomorphism.
Let $r := \psi(1)$.
The following lemma is easy but fundamental.

\begin{lemma}
	\label{lem4}
For any $f \in \operatorname{Rat}(\Gamma_1)$, the following hold:

$(1)$ $r \cdot \operatorname{max}\{ f(x) \,|\, x \in \Gamma_1 \} = \operatorname{max}\{ \psi(f)(x^{\prime}) \,|\, x^{\prime} \in \Gamma_2 \}$, and

$(2)$ $r \cdot \operatorname{min}\{ f(x) \,|\, x \in \Gamma_1 \} = \operatorname{min}\{ \psi(f)(x^{\prime}) \,|\, x^{\prime} \in \Gamma_2 \}$.
\end{lemma}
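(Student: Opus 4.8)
The plan is to characterize both the maximum and the minimum value of a rational function purely in terms of the semiring order, so that the two equalities follow formally from the facts already established: that $\psi$ restricts on $\boldsymbol{T}$ to the $r$-expansive map $t \mapsto r \cdot t$ (Remark \ref{rem2}), that $\psi(\boldsymbol{T}) = \boldsymbol{T}$ (Lemma \ref{lem3}), and that both $\psi$ and $\psi^{-1}$ are order preserving (Lemma \ref{lem2}).

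First I would record the order-theoretic description of the extremal values. For a constant $c \in \boldsymbol{T}$, regarded as a constant rational function, and any $f \in \operatorname{Rat}(\Gamma_1)$, the identity $f \oplus c = c$ holds if and only if $f(x) \le c$ for every $x \in \Gamma_1$, that is, if and only if $\max\{ f(x) \mid x \in \Gamma_1 \} \le c$; dually, $c \oplus f = f$ holds if and only if $c \le \min\{ f(x) \mid x \in \Gamma_1 \}$. Consequently
\[
\max\{ f(x) \mid x \in \Gamma_1 \} = \inf\{ c \in \boldsymbol{T} \mid f \oplus c = c \}
\]
and
\[
\min\{ f(x) \mid x \in \Gamma_1 \} = \sup\{ c \in \boldsymbol{T} \mid c \oplus f = f \},
\]
where for the maximum we use the convention that the infimum of the empty set is $+\infty$; the empty-set case occurs precisely when $f$ is unbounded above, i.e.\ when $\max\{ f(x) \mid x \in \Gamma_1 \} = +\infty$. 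Note that for the minimum no such exception is needed, since $\min f \in \boldsymbol{T}$ is always attained and the defining set always contains $-\infty$.

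Next I would transport these sets across $\psi$. Applying $\psi$ and $\psi^{-1}$ to the relation $f \oplus c = c$, and using $\psi(c) = r \cdot c$ together with $\psi(\boldsymbol{T}) = \boldsymbol{T}$, gives a bijection
\[
\{ c \in \boldsymbol{T} \mid f \oplus c = c \} \longrightarrow \{ d \in \boldsymbol{T} \mid \psi(f) \oplus d = d \}, \quad c \mapsto r \cdot c ,
\]
and likewise for the minimum sets. Since $r \in \boldsymbol{R}_{>0}$, multiplication by $r$ is an order isomorphism of the extended reals that commutes with $\inf$ and $\sup$ and respects $r \cdot \infty = \infty$ and $r \cdot (-\infty) = -\infty$; hence the infimum and supremum of the right-hand sets equal $r$ times those of the left-hand sets. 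Combining this with the two displayed formulas yields $\max \psi(f) = r \cdot \max f$ and $\min \psi(f) = r \cdot \min f$, which are exactly $(1)$ and $(2)$.

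I expect the only genuine care to lie in the boundary bookkeeping at points at infinity, where $f$ may take the values $\pm\infty$: I must verify that the equivalence $f \oplus c = c \Leftrightarrow \max f \le c$ survives the continuous extension of $f \oplus c$ to $\Gamma_{1\infty}$ (if $f \to +\infty$ along a leaf edge then no finite constant can dominate it, which is precisely the unbounded case), and that this unbounded case, corresponding to an empty constant set, is matched on the target side through the bijection, using that $\psi^{-1}$ is also an order-preserving isomorphism carrying $\boldsymbol{T}$ onto $\boldsymbol{T}$. Everything else is a routine manipulation of suprema and infima.
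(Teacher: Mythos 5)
Your proof is correct and takes essentially the same route as the paper: both detect the extremal values of $f$ by testing $f \oplus c = c$ against constants $c \in \boldsymbol{T}$ and transport this through $\psi$ via $\psi(c) = r \cdot c$ (Lemma \ref{lem3} and Remark \ref{rem2}), with your empty-set convention for $\inf$ doing the work of the paper's compactness observation in the unbounded case. The only cosmetic difference is that you treat the minimum directly through the dual characterization $\sup\{ c \in \boldsymbol{T} \mid c \oplus f = f \}$, whereas the paper reduces it to the maximum case by passing to $f^{\odot(-1)}$.
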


\begin{proof}
If $f \in \boldsymbol{T}$, the assertions are clear.

Assume that $f$ is not a constant function.
Let $a$ be the maximum value of $f$.
In this case, $a$ is in $\boldsymbol{R} \cup \{ \infty \}$.

Assume $a \in \boldsymbol{R}$.
We have
\[
f \oplus b
\begin{cases}
= b \quad \text{if } b \ge a,\\
\not= b \quad \text{if } b < a.
\end{cases}
\]
Therefore we have
\begin{eqnarray*}
\psi(f) \oplus r \cdot b = \psi(f) \oplus \psi(b) = \psi(f \oplus b)\\
\begin{cases}
= \psi(b) = r \cdot b \	\	\text{if } b \ge a,\\
\not= \psi(b) = r \cdot b \	\	\text{if } b < a.
\end{cases}
\end{eqnarray*}
Thus the maximum value of $\psi(f)$ is $r \cdot a$.

Assume $a = \infty$.
Then for any $t \in \boldsymbol{T}$, we have $f \oplus t \not= t$.
Thus 
\[
\psi(f) \oplus r \cdot t = \psi(f) \oplus \psi(t) = \psi(f \oplus t) \not= \psi(t) = r \cdot t
\]
hold.
This means that the maximum value of $\psi(f)$ is $\infty$ since tropical curves are compact.

For the minimum values of $f$ and $\psi(f)$, we can obtain the conclusion by applying the maximum value case for $f^{\odot (-1)} = -f$ and $\psi(f^{\odot (-1)}) = -\psi(f)$ since 
\[
\operatorname{min}\{ f(x) \,|\, x \in \Gamma_1 \} = - \operatorname{max}\{ -f(x) \,|\, x \in \Gamma_1 \}
\]
and 
\[
\operatorname{min}\{ \psi(f)(x^{\prime}) \,|\, x^{\prime} \in \Gamma_2 \} = - \operatorname{max}\{ -\psi(f)(x^{\prime}) \,|\, x^{\prime} \in \Gamma_2 \}. \qedhere
\]
\end{proof}

Lemma \ref{lem5} is our key lemma to prove Theorem \ref{thm1}.

\begin{lemma}
	\label{lem5}
$(1)$ For any $x \in \Gamma_1 \setminus \Gamma_{1 \infty}$, there exist $\varepsilon > 0$ and a unique point $x^{\prime} \in \Gamma_2 \setminus \Gamma_{2 \infty}$ such that $\psi(\operatorname{CF}(\{ x \}; \varepsilon)) = \operatorname{CF}(\{ x^{\prime} \}; \psi(1) \cdot \varepsilon)$.
Furthermore, $\operatorname{val}(x) = \operatorname{val}(x^{\prime})$ holds.

$(2)$ For any $x \in \Gamma_{1 \infty}$, there exists a finite point $y$ on the unique edge $e$ incident to $x$ and there exists a unique point $x^{\prime} \in \Gamma_{2 \infty}$, such that the point $y^{\prime} \in \Gamma_2 \setminus \Gamma_{2 \infty}$ corresponding to $y$ is on the unique edge incident to $x^{\prime}$ and $\psi(\operatorname{CF}(\Gamma_1 \setminus (y, x]; \infty)) = \operatorname{CF}(\Gamma_2 \setminus (y^{\prime}, x^{\prime}]; \infty)$.
\end{lemma}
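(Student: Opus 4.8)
The plan is to recover the finite points of $\Gamma_i$ purely from the semiring structure and then to pin down the chip-firing functions by a value-tracking argument. Throughout set $r:=\psi(1)$, so that $\psi(t)=r\cdot t$ for $t\in\boldsymbol{T}$ by Remark \ref{rem2}. I treat (1) first; (2) is parallel, with the value $-\infty$ in place of the maximum value. For a rational function $f$ with $\operatorname{max}\{f(x)\mid x\in\Gamma_1\}=0$ (equivalently $f\oplus 0=0$), write $Z(f):=\{x\mid f(x)=0\}$ for its maximum locus. A pointwise check gives $Z(f\oplus g)=Z(f)\cup Z(g)$ and $Z(f\odot g)=Z(f)\cap Z(g)$, while $Z(f)\subseteq Z(g)$ holds if and only if $g\ge f^{\odot n}$ for some $n\in\boldsymbol{Z}_{>0}$; so the inclusion order, joins and meets of maximum loci are all expressible through $\oplus$, $\odot$ and $\le$. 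By Lemma \ref{lem4} the condition $f\oplus 0=0$ is preserved by $\psi$ (it multiplies the maximum value by $r>0$) and by Lemma \ref{lem2} the order is preserved, so $\psi$ induces an isomorphism of the lattice of maximum loci of $\Gamma_1$ onto that of $\Gamma_2$. Its atoms are exactly the one-point sets $\{x\}$ with $x$ finite: such a set is realized by $\operatorname{CF}(\{x\};\varepsilon)$ for small $\varepsilon$, a point at infinity cannot be an isolated maximum with finite maximum value (a rational function would have to increase toward an infinite end), and every nonempty maximum locus contains a finite point. Thus $\psi$ induces a bijection $\varphi$ from the finite points of $\Gamma_1$ onto those of $\Gamma_2$, and $x'$ is defined as the image of $x$; it is the unique point with $Z(\psi(\operatorname{CF}(\{x\};\varepsilon)))=\{x'\}$.

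Next I show $\psi(f)(\varphi(z))=r\cdot f(z)$ for every $f\in\operatorname{Rat}(\Gamma_1)$ and every finite point $z$, that is, $\psi(f)=r\cdot(f\circ\varphi^{-1})$. For a constant $t\in\boldsymbol{R}$, the function $(f\oplus t)^{\odot(-1)}\odot t$ has maximum value $0$ and maximum locus exactly $\{f\le t\}$; applying $\psi$ and using $\psi(t)=r\cdot t$, its image has maximum locus $\{\psi(f)\le r\cdot t\}$. Since $\varphi$ transports maximum loci and their atoms, a finite point $z$ satisfies $f(z)\le t$ if and only if $\psi(f)(\varphi(z))\le r\cdot t$, for every $t$; letting $t$ vary gives $\psi(f)(\varphi(z))=r\cdot f(z)$. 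Because rational functions separate points, this forces $\varphi$ to be a homeomorphism and $f\circ\varphi^{-1}=r^{-1}\psi(f)$ to be rational on $\Gamma_2$ for every $f$; in particular $\varphi$ is piecewise affine and sends edges to edges.

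The crux is to upgrade this to the metric statement, and it is where I expect the real work. Fix a branch of $\Gamma_1$ at $x$ and let $\lambda>0$ be the local factor by which $\varphi$ distorts length there; on this branch $\operatorname{CF}(\{x\};\varepsilon)$ has slope $\pm 1$, so $\psi(\operatorname{CF}(\{x\};\varepsilon))=r\cdot(\operatorname{CF}(\{x\};\varepsilon)\circ\varphi^{-1})$ has slope $\pm r\lambda$ on the image branch; as this is a rational function, $r\lambda\in\boldsymbol{Z}_{>0}$. Applying the same reasoning to $\psi^{-1}$ (whose multiplier is $r^{-1}$ by Remark \ref{rem2}) and to a unit tent at $x'$ yields $(r\lambda)^{-1}\in\boldsymbol{Z}_{>0}$, whence $r\lambda=1$. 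Therefore $\varphi$ multiplies $\operatorname{dist}$ by $r$ near $x$, and $\psi(\operatorname{CF}(\{x\};\varepsilon))=r\cdot(\operatorname{CF}(\{x\};\varepsilon)\circ\varphi^{-1})=\operatorname{CF}(\{x'\};r\cdot\varepsilon)$. For the valence, realize the $\operatorname{dist}$-sphere of radius $\varepsilon/2$ about $x$ as a maximum locus $Z(f_1)\cap Z(f_2)$ with $f_1,f_2$ built from $\operatorname{CF}(\{x\};\varepsilon)$ and $\operatorname{CF}(\{x\};\varepsilon/2)$; for small $\varepsilon$ it consists of $\operatorname{val}(x)$ points. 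Since connectedness of a maximum locus is order-theoretic (it is disconnected precisely when it is a join of two nonempty maximum loci having no atom in common), the number of connected components is a lattice invariant, whence $\operatorname{val}(x)=\operatorname{val}(x')$.

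Finally, (2) runs along the same lines with the value $-\infty$ replacing the maximum value. For $f\not=-\infty$ put $P(f):=\{x\mid f(x)=-\infty\}\subseteq\Gamma_{1\infty}$; then $P(f\odot g)=P(f)\cup P(g)$ and $P(f\oplus g)=P(f)\cap P(g)$, and by Lemma \ref{lem4} the property of attaining $-\infty$ is preserved by $\psi$. The atoms of this lattice are the one-point subsets of $\Gamma_{1\infty}$, so $\varphi$ extends to a bijection $\Gamma_{1\infty}\to\Gamma_{2\infty}$; write $x'$ for the image of $x$. The value-tracking and integrality arguments above, applied on the unique leaf edge incident to $x$, show that $\varphi$ carries it onto the unique leaf edge incident to $x'$ and scales its length by $r$; taking a finite point $y$ on that edge and $y':=\varphi(y)$, the identity $\psi(f)=r\cdot(f\circ\varphi^{-1})$ with $f=\operatorname{CF}(\Gamma_1\setminus(y,x];\infty)$ gives $\psi(\operatorname{CF}(\Gamma_1\setminus(y,x];\infty))=\operatorname{CF}(\Gamma_2\setminus(y',x'];\infty)$. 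The main obstacle throughout is the passage from the order/lattice data to the metric (the equality of radii $r\cdot\varepsilon$ and of valences), which the two-sided integrality squeeze against $\psi^{-1}$, in the spirit of Proposition \ref{prop1}, is designed to overcome.
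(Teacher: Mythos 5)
Your route is genuinely different from the paper's. The paper proves Lemma \ref{lem5} by a hands-on analysis of the single function $\psi(\operatorname{CF}(\{x\};\varepsilon))$: it shows the maximum locus $\operatorname{Max}^{\prime}$ is finite and avoids $\Gamma_{2\infty}$, then bounds $\#\operatorname{Max}^{\prime}$ and the valences by counting terms in irredundant representations satisfying the conditions $(\star)$, $(\star\star)$, with an induction on $\operatorname{val}(x)$; part (2) requires choosing $y$ far enough out on the leaf edge. You instead reconstruct the whole lattice of maximum loci order-theoretically, identify finite points with its atoms, and then derive the global identity $\psi(f)=r\cdot(f\circ\varphi^{-1})$ by value-tracking through the loci $\{f\le t\}$, pinning down the metric factor by the two-sided integrality squeeze against $\psi^{-1}$ (in the spirit of Proposition \ref{prop1}). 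If carried out correctly this buys something real: you obtain Corollary \ref{cor5} \emph{before} Lemma \ref{lem5}, without invoking the finite-generation theorem of \cite{JuAe3} that the paper's proof of Corollary \ref{cor5} relies on, and in part (2) your argument shows that \emph{every} finite $y$ on the leaf edge works, slightly more than the paper asserts.

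However, the linchpin of your lattice transport is stated incorrectly: the equivalence ``$Z(f)\subseteq Z(g)$ if and only if $g\ge f^{\odot n}$ for some $n\in\boldsymbol{Z}_{>0}$'' is false when $g$ attains $-\infty$ where $f$ does not. Take $\Gamma_1=[0,\infty]$, $f=\operatorname{CF}(\{0\};1)$ and $g=\operatorname{CF}(\{0\};\infty)$: then $Z(f)=Z(g)=\{0\}$, but $g(x)=-x<-n=f^{\odot n}(x)$ for $x>n$, so no such $n$ exists; as stated, the well-definedness of $Z(f)\mapsto Z(\psi(f))$ collapses. The gap is local and repairable: $Z(f)\subseteq Z(g)$ holds if and only if there exist $n\in\boldsymbol{Z}_{>0}$ and $c\in\boldsymbol{R}_{<0}$ with $g\oplus c\ge f^{\odot n}$ (truncating $g$ below makes the compactness/piecewise-affine argument go through), and this relation is preserved by $\psi$ because $\psi$ maps negative real constants to negative real constants by Lemma \ref{lem3} and Remark \ref{rem2}. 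Two further soft spots you should shore up: the claim that separation of points by rational functions ``forces $\varphi$ to be a homeomorphism'' needs the observation that sublevel sets of rational functions form a basis of closed sets on the finite part, applied symmetrically to $\psi^{-1}$; and in part (2) the asserted lattice of pole loci $P(f)$ is never given an order characterization analogous to the one above, so its well-definedness is unproven --- but this is cosmetic, since your fallback (the scaling homeomorphism of $\Gamma_1\setminus\Gamma_{1\infty}$ onto $\Gamma_2\setminus\Gamma_{2\infty}$ extends over the ends, matching $\Gamma_{1\infty}$ with $\Gamma_{2\infty}$, and then $\psi(\operatorname{CF}(\Gamma_1\setminus(y,x];\infty))=\operatorname{CF}(\Gamma_2\setminus(y^{\prime},x^{\prime}];\infty)$ follows by direct computation) already suffices. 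With these repairs the proof is sound.
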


In the proof of Lemma \ref{lem5}, we will consider several times rational functions of the form $(f^{\odot(-1)} \oplus a)^{\odot(-1)}$ for a rational function $f$ on a tropical curve $\Gamma$ and $a \in \boldsymbol{R}$.
It is equal to the minimum of $f$ and $-a$, i.e., for any $x \in \Gamma$, 
\[
(f^{\odot(-1)} \oplus a)^{\odot(-1)}(x) = \operatorname{min}\{ f(x), -a \},
\]
where we define $\operatorname{min} \{ \infty, t \} = t$ and $\operatorname{min} \{ -\infty, t \} = -\infty$ for any $t \in \boldsymbol{R}$.
Also, if 
\[
f = \bigoplus_{i = 1}^n g_i
\]
and for any $j$,
\[
f \not= \bigoplus_{i \not= j, i = 1}^n g_i
\]
hold, then we call this finite sum $\bigoplus_{i = 1}^n g_i$ an \textit{irredundant representation} of $f$.
The image of an irredundant representation by a semiring isomorphism is also an irredundant representation.
In particular, if $f$ is a chip-firing move of the form of Lemma \ref{lem5}$(1)$ on $\Gamma_1$ and if $\bigoplus_{i = 1}^n g_i$ is an irredundant representation of $f$ satisfying the following condition $(\star)$, then $n \le \operatorname{val}(x)$.
\[
(\star)
\begin{cases}
(1)~\text{The maximum value of each }g_i \text{ is zero.}\\
(2)~\text{The minimum value of each }g_i \text{ is }-\varepsilon.\\
(3)~\text{Each }g_i \text{ has a unique zero on each half-edge of } x \text{ in the }\varepsilon\text{-ne-}\\\text{ighborhood of }x.
\end{cases}
\]
Similarly, if $f$ is a chip-firing move of the form of Lemma \ref{lem5}$(2)$ on $\Gamma_1$ and if $\bigoplus_{i = 1}^n g_i$ is an irredundant representation of $f^{\odot (-1)}$ satisfying the following condition $(\star \star)$, then $n = \operatorname{val}(x) = 1$.
\[
(\star \star)
\begin{cases}
(1)~\text{The maximum value of each }g_i \text{ is }\infty.\\
(2)~\text{The minimum value of each }g_i \text{ is zero.}\\
(3)~\text{Each }g_i \text{ has a unique zero on } [y, x).
\end{cases}
\]

\begin{proof}[Proof of Lemma \ref{lem5}]
We shall prove $(1)$.
For any $\varepsilon > 0$, we consider $\psi(\operatorname{CF}(\{ x \}; \varepsilon))$.
By Lemma \ref{lem4}, the maximum value of $\psi(\operatorname{CF}(\{ x \}; \varepsilon))$ is zero.
Let $\operatorname{Max}^{\prime}$ denote the set of all points of $\Gamma_2$ where $\psi(\operatorname{CF}(\{ x \}; \varepsilon))$ attains the maximum value $0$.
Since $\operatorname{CF}(\{ x \}; \varepsilon)$ is not a constant function, $\psi(\operatorname{CF}(\{ x \}; \varepsilon))$ is also not a constant function.
Therefore the boundary set $\partial \operatorname{Max}^{\prime}$ of $\operatorname{Max}^{\prime}$ is a non-empty finite set.
For $\delta$ such that $0 < \delta < \varepsilon$, since 
\[
\operatorname{CF}(\{ x \}; \delta) = \operatorname{CF}(\{ x \}; \varepsilon) \oplus (-\delta),
\]
we have 
\[
\psi(\operatorname{CF}(\{ x \}; \delta)) = \psi(\operatorname{CF}(\{ x \}; \varepsilon)) \oplus (-r \cdot \delta).
\]
Therefore, if necessary, by replacing $\varepsilon$ with $\delta$, we may assume that $\psi(\operatorname{CF}(\{ x \}; \varepsilon))$ has a unique zero on each half-edge of each point $x^{\prime} \in \partial \operatorname{Max}^{\prime}$ in the $(r \cdot \varepsilon)$-neighborhood of $\operatorname{Max}^{\prime}$.
Also, when $\psi(\operatorname{CF}(\{ x \}; \varepsilon))$ has a pole outside of $\operatorname{Max}^{\prime}$, by replacing $\varepsilon$ with the $\frac{1}{r}$ times the maximum value of $\psi(\operatorname{CF}(\{ x \}; \varepsilon))$ on such poles as above $\delta$, we may assume that there are no such poles.
Hereafter, we assume these two conditions.

Assume that $\# \operatorname{Max}^{\prime}$ is infinite.
Since $\psi(\operatorname{CF}(\{ x \}; \varepsilon))$ is a rational function, $\operatorname{Max}^{\prime}$ has a finite number of connected components.
Thus $\operatorname{Max}^{\prime}$ has a connected component that is an infinite set.
For such a connected component $\Gamma^{\prime}$, let $y^{\prime} \in \Gamma^{\prime} \setminus (\partial \Gamma^{\prime} \cup \Gamma_{2\infty})$.
Let $0 < \delta^{\prime} \le \operatorname{min} \{ \operatorname{dist}(y^{\prime}, z^{\prime}) \,|\, z^{\prime} \in \partial \Gamma^{\prime} \}$.
Let $(\ast)$ denote the rational function
\[
\psi^{-1}(\psi(\operatorname{CF}(\{ x \}; \varepsilon)) \odot \operatorname{CF}(\{ y^{\prime} \}; \delta^{\prime}) \odot \delta^{\prime}).
\]
Since
\[
\left( \left\{ \psi (\operatorname{CF}(\{ x \}; \varepsilon )) \odot \operatorname{CF}(\{ y^{\prime} \}; \delta^{\prime}) \odot \delta^{\prime} \right\}^{\odot (-1)} \oplus 0 \right)^{\odot (-1)} = \psi(\operatorname{CF}(\{ x \}; \varepsilon)),
\]
we have
\begin{eqnarray*}
&&\operatorname{CF}(\{ x \}; \varepsilon)\\
&=& \psi^{-1} \left( \left[ \left\{ \psi(\operatorname{CF}(\{ x \}; \varepsilon)) \odot \operatorname{CF}(\{ y^{\prime} \}; \delta^{\prime}) \odot \delta^{\prime} \right\}^{\odot (-1)} \oplus 0 \right]^{\odot (-1)} \right)\\
&=& \left(\psi^{-1} \left( \left\{ \psi(\operatorname{CF}(\{ x \}; \varepsilon)) \odot \operatorname{CF}(\{ y^{\prime} \}; \delta^{\prime}) \odot \delta^{\prime} \right\}^{\odot (-1)} \oplus 0 \right) \right)^{\odot (-1)}\\
&=& \left( (\ast)^{\odot (-1)} \oplus 0 \right)^{\odot (-1)}.
\end{eqnarray*}
Therefore $(\ast)$ must be $\operatorname{CF}(\{ x \}; \varepsilon)$.
However this contradicts that the maximum value of $(\ast)$ is $\frac{\delta^{\prime}}{r}$ by Lemma \ref{lem4}.
Hence $\operatorname{Max}^{\prime}$ is a finite set.

If $\operatorname{Max}^{\prime} \cap \Gamma_{2 \infty} \not= \varnothing$, then for any $x^{\prime} \in \operatorname{Max}^{\prime} \cap \Gamma_{2 \infty}$, $\psi(\operatorname{CF}(\{ x \}; \varepsilon))$ takes the maximum value $0$ at $x^{\prime}$ and has $x^{\prime}$ as a pole.
However, by the definition of a rational function, this does not occur.
Thus $\operatorname{Max}^{\prime} \cap \Gamma_{2 \infty}$ is empty.

For any $x^{\prime} \in \operatorname{Max}^{\prime}$, let $g^{\prime}_{x^{\prime}}$ be the rational function on $\Gamma_2$ that coincides with $\psi(\operatorname{CF}(\{ x \}; \varepsilon))$ in the $(r \cdot \varepsilon)$-neighborhood of $x^{\prime}$ and is $-r \cdot \varepsilon$ on other points.
Then we have 
\[
\psi(\operatorname{CF}(\{ x \}; \varepsilon)) = \bigoplus_{x^{\prime} \in \operatorname{Max}^{\prime}} g^{\prime}_{x^{\prime}}.
\]
The right-hand side is an irredundant representation.
Hence we have
\[
\operatorname{CF}(\{ x \}; \varepsilon) = \bigoplus_{x^{\prime} \in \operatorname{Max}^{\prime}} \psi^{-1}(g^{\prime}_{x^{\prime}}).
\]
This right-hand side is also an irredundant representation.
If necessary, by replacing $\varepsilon$ with a smaller positive number, we may assume that this irredundant representation satisfies the condition $(\star)$.
Thus the number of elements of $\operatorname{Max}^{\prime}$ is at most the valence of $x$.
In particular, $\operatorname{val}(x) = 1$ implies $\# \operatorname{Max}^{\prime} = 1$.

We will show that if $\operatorname{val}(x) = 1$, then $\operatorname{val}(x^{\prime}) = 1$ for the unique element $x^{\prime}$ of $\operatorname{Max}^{\prime}$ and $\psi(\operatorname{CF}(\{ x \}; \varepsilon)) = \operatorname{CF}(\{ x^{\prime} \}; r \cdot \varepsilon)$.
Assume that $\operatorname{val}(x^{\prime}) \ge 2$.
Let $m = \operatorname{val}(x^{\prime})$ and $h_1^{\prime}, \ldots, h_m^{\prime}$ be the  half-edges of $x^{\prime}$ in the $(r \cdot \varepsilon)$-neighborhood of $x^{\prime}$.
For each $i = 1, \ldots, m$, let $g^{\prime}_i$ be the rational function on $\Gamma_2$ defined as follows:
$g^{\prime}_i$ coincides with $\psi(\operatorname{CF}(\{ x \}; \varepsilon))$ on $h_i^{\prime}$, and, for any $j \not= i$, $g^{\prime}_i$ has the slope $s (< 0)$ of $\psi(\operatorname{CF}(\{ x \}; \varepsilon))$ minus one on the intersection of $h^{\prime}_j$ and the $(r \cdot \varepsilon / (|s| + 1) )$-neighborhood of $x^{\prime}$, and, $g^{\prime}_i$ takes the minimum value $- r \cdot \varepsilon$ on other points.
Then we have 
\[
\psi(\operatorname{CF}(\{ x \}; \varepsilon)) = \bigoplus_{i = 1}^{m} g^{\prime}_i.
\]
The right-hand side is an irredundant representation.
Hence we obtain the irredundant representation $\bigoplus_{i = 1}^{m} \psi^{-1}(g^{\prime}_i)$ of $\operatorname{CF}(\{ x \}; \varepsilon)$ consisting of $m$ elements.
Again, if necessary, by replacing $\varepsilon$ with a smaller positive number, we may assume that this irredundant representation satisfies the condition $(\star)$.
This means that $m \le 1$, a contradiction.
Thus $\operatorname{val}(x^{\prime})$ must be one.
Also if $\psi(\operatorname{CF}(\{ x \}; \varepsilon))$ is not $\operatorname{CF}(\{ x^{\prime} \}; r \cdot \varepsilon)$, then 
\[
\psi(\operatorname{CF}(\{ x \}; \varepsilon)) \oplus \operatorname{CF}(\{ x^{\prime} \}; r \cdot \varepsilon) = \operatorname{CF}(\{ x^{\prime} \}; r \cdot \varepsilon).
\]
Then, we have 
\[
\operatorname{CF}(\{ x \}; \varepsilon) \not= \psi^{-1}(\operatorname{CF}(\{ x^{\prime} \}; r \cdot \varepsilon))
\]
and
\[
\operatorname{CF}(\{ x \}; \varepsilon) \oplus \psi^{-1}(\operatorname{CF}(\{ x^{\prime} \}; r \cdot \varepsilon)) = \psi^{-1}(\operatorname{CF}(\{ x^{\prime} \}; r \cdot \varepsilon)).
\]
However, since $\psi^{-1}(\operatorname{CF}(\{ x^{\prime} \}; r \cdot \varepsilon))$ attains the maximum value $0$ at $x$ and only at $x$, and since the minimum value of $\psi^{-1}(\operatorname{CF}(\{ x^{\prime} \}; r \cdot \varepsilon))$ is $-\varepsilon$ by Lemma \ref{lem4}, it does not occur.
Therefore $\psi(\operatorname{CF}(\{ x \}; \varepsilon))$ must be $\operatorname{CF}(\{ x^{\prime} \}; r \cdot \varepsilon)$.

Assume that when $\operatorname{val}(x) = k$, the assertion holds.
Consider the case $\operatorname{val}(x) = k + 1$.

Suppose that $\operatorname{Max}^{\prime}$ contains a point $y^{\prime}$ of valence at most $k$.
If necessary, by replacing $\varepsilon$ with a sufficiently small positive number, by assumption, there exists a unique point $y \in \Gamma_1 \setminus \Gamma_{1 \infty}$ such that 
\[
\psi^{-1}(\operatorname{CF}(\{ y^{\prime} \}; r \cdot \varepsilon)) = \operatorname{CF}(\{ y \}; \varepsilon)
\]
and $k \ge \operatorname{val}(y^{\prime}) = \operatorname{val}(y)$.
Thus $x \not= y$.
On the other hand, since 
\[
g^{\prime}_{y^{\prime}} \oplus \operatorname{CF}(\{ y^{\prime} \}; r \cdot \varepsilon) = \operatorname{CF}(\{ y^{\prime} \}; r \cdot \varepsilon)
\]
and
\[
\bigoplus_{x^{\prime} \in \operatorname{Max}^{\prime}} g^{\prime}_{x^{\prime}} = \psi(\operatorname{CF}(\{ x \}; \varepsilon)),
\]
we have
\[
\psi^{-1} (g^{\prime}_{y^{\prime}}) \oplus \operatorname{CF}(\{ y \}; \varepsilon) = \operatorname{CF}(\{ y \}; \varepsilon)
\]
and
\[
\bigoplus_{x^{\prime} \in \operatorname{Max}^{\prime}} \psi^{-1}(g^{\prime}_{x^{\prime}}) = \operatorname{CF}(\{ x \}; \varepsilon).
\]
By the latter equality, $\psi^{-1}(g^{\prime}_{y^{\prime}})$ must attain the maximum value $0$ at $x$ and only at $x$.
However, by the former equality, this means $x = y$, a contradiction.
Therefore $\operatorname{Max}^{\prime}$ has no points of valence at most $k$.

Assume that $\# \operatorname{Max}^{\prime} \ge 2$.
For any $x^{\prime} \in \operatorname{Max}^{\prime}$, let $g^{\prime}_{x^{\prime}, 1}, \ldots, g^{\prime}_{x^{\prime}, \operatorname{val}(x^{\prime})}$ be the rational functions on $\Gamma_2$ defined as the above $g^{\prime}_i$s which is not constant only on the $(r \cdot \varepsilon)$-neighborhood of $x^{\prime}$.
Then we have 
\[
\psi(\operatorname{CF}(\{ x \}; \varepsilon)) = \bigoplus_{x^{\prime} \in \operatorname{Max}^{\prime}} \bigoplus_{i = 1}^{\operatorname{val}(x^{\prime})} g^{\prime}_{x^{\prime}, i}.
\]
The right-hand side is an irredundant representation.
This means that if $\varepsilon$ is sufficiently small, then $\operatorname{CF}(\{ x \}; \varepsilon)$ has an irredundant representation 
\[
\bigoplus_{x^{\prime} \in \operatorname{Max}^{\prime}} \bigoplus_{i = 1}^{\operatorname{val}(x^{\prime})} \psi^{-1}(g^{\prime}_{x^{\prime}, i})
\]
consisting of $\sum_{x^{\prime} \in \operatorname{Max}^{\prime}} \operatorname{val}(x^{\prime}) > \operatorname{val}(x)$ elements and satisfying the condition $(\star)$.
This is a contradiction.
Thus $\# \operatorname{Max}^{\prime}$ must be one.
Let $x^{\prime}$ be the unique point in $\operatorname{Max}^{\prime}$.
If $\operatorname{val}(x) < \operatorname{val}(x^{\prime})$, then by a similar arugument above, we have a contradiction.
Therefore $\operatorname{val}(x) = \operatorname{val}(x^{\prime})$.
Also by a similar argument in the case of $\operatorname{val}(x) = 1$, $\psi(\operatorname{CF}\{ x \}; \varepsilon))$ is equal to $\operatorname{CF}(\{ x^{\prime} \}; r \cdot \varepsilon)$.
Hence we have the conclusion.

We shall prove $(2)$.
Let $y$ be a point of $e \setminus \Gamma_{2 \infty}$.
By Lemma \ref{lem4}, the maximum (resp. minimum) value of $\psi(\operatorname{CF}(\Gamma_1 \setminus (y, x]; \infty))$ is $0$ (resp. $-\infty$).
Thus if $\operatorname{Min}^{\prime}$ denotes the set of all points of $\Gamma_2$ where $\psi(\operatorname{CF}(\Gamma_1 \setminus (y, x]; \infty))$ attains the minimum value $-\infty$, then $\operatorname{Min}^{\prime} \subset \Gamma_{2 \infty}$.
Hence $\operatorname{Min}^{\prime}$ is finite.
For a sufficiently large number $a > 0$, there exists a unique point $z \in (y, x)$ such that 
\[
\left\{  (- a) \odot \operatorname{CF}(\Gamma_1 \setminus (y, x]; \infty)^{\odot (-1)} \oplus 0 \right\}^{\odot (-1)} = \operatorname{CF}(\Gamma_1 \setminus (z, x]; \infty).
\]
Thus we have 
\[
\{ (- r \cdot a) \odot \psi(\operatorname{CF}(\Gamma_1 \setminus (y, x]; \infty))^{\odot (-1)} \oplus 0 \}^{\odot (-1)} = \psi(\operatorname{CF}(\Gamma_1 \setminus (z, x]; \infty)).
\]
Hence, if necessary, by replacing $y$ with $z$, we may assume the following:
$\psi(\operatorname{CF}(\Gamma_1 \setminus (y, x]; \infty))$ has no zeros on $\Gamma_2 \setminus \Gamma_{2 \infty}$, has a unique pole on the edge incident to each element of $\operatorname{Min}^{\prime}$, and has no other poles.
Hereafter, we assume these three conditions.
If $\operatorname{P}^{\prime}$ denotes the set of all poles of $\psi(\operatorname{CF}(\Gamma_1 \setminus (y, x]; \infty))$, then there exists a one-to-one correspondence between $\operatorname{P}^{\prime}$ and $\operatorname{Min}^{\prime}$.
Let $y^{\prime} \in \operatorname{P}^{\prime}$ correspond to $x^{\prime} \in \operatorname{Min}^{\prime}$.
Then there exists a unique positive integer $n_{y^{\prime}}$ such that
\begin{eqnarray*}
&&\operatorname{CF}(\Gamma_2 \setminus (y^{\prime}, x^{\prime}]; \infty))^{\odot n_{y^{\prime}}}\\
&=& \psi(\operatorname{CF}(\Gamma_1 \setminus (y, x]; \infty)) \oplus \operatorname{CF}(\Gamma_2 \setminus (y^{\prime}, x^{\prime}]; \infty)^{\odot n_{y^{\prime}}}
\end{eqnarray*}
and
\begin{eqnarray*}
&& \operatorname{CF}(\Gamma_2 \setminus (y^{\prime}, x^{\prime}]; \infty)^{\odot (n_{y^{\prime}} + 1)}\\
&\not=&\psi(\operatorname{CF}(\Gamma_1 \setminus (y, x]; \infty)) \oplus \operatorname{CF}(\Gamma_2 \setminus (y^{\prime}, x^{\prime}]; \infty)^{\odot (n_{y^{\prime}} + 1)}. 
\end{eqnarray*}
By considering the images of these equalities by $\psi^{-1}$, we have $n_{y^{\prime}} = 1$.
Let $g^{\prime}_{y^{\prime}}$ be $\operatorname{CF}(\Gamma_2 \setminus (y^{\prime}, x^{\prime}]; \infty)^{\odot (-1)}$.
Then $\bigoplus_{y^{\prime} \in P^{\prime}} g^{\prime}_{y^{\prime}}$ is an irredundant representation of $\psi(\operatorname{CF}(\Gamma_1 \setminus (y, x]; \infty))^{\odot (-1)}$.
If necessary, by replacing $y$ with $z \in (y, x)$ such that $\operatorname{dist}(y, z)$ is sufficiently large, we may assume that the irredundant repredentation $\bigoplus_{y^{\prime} \in P^{\prime}} \psi^{-1}(g^{\prime}_{y^{\prime}})$ of $\operatorname{CF}(\Gamma_1 \setminus (y, x]; \infty)^{\odot (-1)}$ satisfies the condition $(\star \star)$.
Hence we have $\# P^{\prime} = \# \operatorname{Min}^{\prime} = 1$.
Therefore, we have
\[
\psi({\rm CF}(\Gamma_1 \setminus (y, x]; \infty)) = \operatorname{CF}(\Gamma_2 \setminus (y^{\prime}, x^{\prime}]; \infty).
\]

We will show that $y$ corresponds to the unique pole $y^{\prime} \in P^{\prime}$.
By $(1)$, for a sufficiently small positive number $\varepsilon$, there exists a unique $z^{\prime} \in \Gamma_2 \setminus \Gamma_{2 \infty}$ such that 
\[
\psi(\operatorname{CF}(\{ y \}; \varepsilon)) = \operatorname{CF}(\{ z^{\prime} \}; r \cdot \varepsilon).
\]
Since
\begin{eqnarray*}
&& \operatorname{CF}(\Gamma_2 \setminus (y^{\prime}, x^{\prime}]; \infty) \oplus (-r \cdot \varepsilon)\\
&=& \psi(\operatorname{CF}(\Gamma_1 \setminus (y, x]; \infty)) \oplus (-r \cdot \varepsilon)\\
&=& \psi(\operatorname{CF}(\Gamma_1 \setminus (y, x]; \infty) \oplus (-\varepsilon))\\
&=& \psi(\operatorname{CF}(\Gamma_1 \setminus (y, x]; \infty) \oplus \operatorname{CF}(\{ y \}; \varepsilon))\\
&=& \operatorname{CF}(\Gamma_2 \setminus (y^{\prime}, x^{\prime}]; \infty) \oplus \operatorname{CF}(\{ z^{\prime} \}; r \cdot \varepsilon),
\end{eqnarray*}
we have $z^{\prime} \not\in (y^{\prime}, x^{\prime}]$.
Suppose that $y^{\prime} \not= z^{\prime}$.
If necessary, by replacing $\varepsilon$ with a positive number at most $\frac{1}{r} \cdot \operatorname{dist}(y^{\prime}, z^{\prime})$, we obtain
\begin{eqnarray*}
&&(-r \cdot \varepsilon) \odot \operatorname{CF}(\Gamma_2 \setminus (y^{\prime}, x^{\prime}]; \infty)\\
&=& \left( \left\{ \operatorname{CF}(\Gamma_2 \setminus (y^{\prime}, x^{\prime}]; \infty) \odot \operatorname{CF}( \{ z^{\prime} \}; r \cdot \varepsilon) \right\}^{\odot (-1)} \oplus r \cdot \varepsilon \right)^{\odot (-1)}.
\end{eqnarray*}
On the other hand, the image of this equality by $\psi^{-1}$ is 
\begin{eqnarray*}
&& (-\varepsilon) \odot \operatorname{CF}(\Gamma_1 \setminus (y, x]; \infty) \\
&=& \left( \left\{ \operatorname{CF}(\Gamma_1 \setminus (y, x]; \infty) \odot \operatorname{CF}(\{ y \}; \varepsilon) \right\}^{\odot (-1)} \oplus \varepsilon \right)^{\odot (-1)},
\end{eqnarray*}
and this is a contradiction.
In conclusion, $y^{\prime}$ must be $z^{\prime}$.
\end{proof}

Now, we can prove Theorem \ref{thm1}.

\begin{proof}[Proof of Theorem \ref{thm1}]
Lemma \ref{lem5} induces a map $\varphi : \Gamma_1 \to \Gamma_2; x \mapsto x^{\prime}$.
We will show that this $\varphi$ is a $\psi(1)$-expansive map.
Since $\psi$ is bijective, $\varphi$ is also bijective.
Put $r := \psi(1)$.
For any $x \in \Gamma_1 \setminus \Gamma_{1 \infty}$ and a positive number $\varepsilon$ obtained by Lemma \ref{lem5}(1), let $U$ be the $\varepsilon$-neighborhood of $x$.
For any $y \in U \setminus \{ x \}$, let $d = \operatorname{dist}(x, y)$.
Let $\delta$ be a positive number for $y$ obtained by Lemma \ref{lem5}.
If necessary, we replace $\varepsilon$ with a positive number between $d$ and $d + \delta$.
Then, we have
\[
\operatorname{CF}(\{ x \}; \varepsilon) \oplus \operatorname{CF}(\{ y \}; \delta) \odot a
\begin{cases}
= \operatorname{CF}(\{ x \}; \varepsilon) \quad \text{if } a \le -d,\\
\not= \operatorname{CF}(\{ x \}; \varepsilon) \quad \text{if } a > -d.
\end{cases}
\]
The images of these by $\psi$ are
\begin{eqnarray*}
\operatorname{CF}(\{ \varphi(x) \}; r \cdot \varepsilon) \oplus \operatorname{CF}(\{ \varphi(y) \}; r \cdot \delta) \odot r \cdot a\\
\begin{cases}
= \operatorname{CF}(\{ \varphi(x) \}; r \cdot \varepsilon) \quad \text{if } a \le -d,\\
\not= \operatorname{CF}(\{ \varphi(x) \}; r \cdot \varepsilon) \quad \text{if } a > -d.
\end{cases}
\end{eqnarray*}
Thus $\operatorname{dist}(\varphi(x), \varphi(y)) = r \cdot d$.
Also, for any $z \in \Gamma_1 \setminus \Gamma_{1 \infty}$, let $P$ be a path from $x$ to $z$.
If necessary, by subdividing $P$, we see that $P$ is mapped by $\varphi$ to a path from $\varphi(x)$ to $\varphi(z)$ of length equal to $r$ times the length of $P$.
Since $\varphi$ is injective, distinct paths from $x$ to $z$ are mapped by $\varphi$ to distinct paths from $\varphi(x)$ to $\varphi(z)$.
Since $\varphi$ is surjective, all paths from $\varphi(x)$ to $\varphi(z)$ are images by $\varphi$ of paths from $x$ to $z$.

By Lemma \ref{lem5}(2), $\varphi$ induces a bijection between $\Gamma_{1 \infty}$ and $\Gamma_{2 \infty}$.
Also, by Lemma \ref{lem5}(2) and its proof, when $\Gamma_{1 \infty} \ni x \mapsto x^{\prime} \in \Gamma_{2 \infty}$, $x^{\prime}$ is a (unique) accumulation point of the image $\{ {\varphi(y_i)} \}$ of any sequence $\{ y_i \}$ such that $y_i \to x$ for $i \to \infty$.
This means that $\varphi$ is continuous.
Thus we obtain the conclusion.
\end{proof}

Theorem \ref{thm1} induces several corollaries.

\begin{cor}
	\label{cor5}
Let $\psi$ and $\varphi$ be as in Theorem \ref{thm1}.
Then 
\[
\psi (f) = (r \cdot f) \circ \varphi^{-1}
\]
holds for any $f \in \operatorname{Rat}(\Gamma_1)$.
\end{cor}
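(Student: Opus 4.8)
The plan is to reduce the identity to a pointwise statement at \emph{finite} points and then to propagate it to all of $\Gamma_2$ by density. Write $r := \psi(1) \in \boldsymbol{R}_{>0}$ (positive by Theorem \ref{thm1}). Both sides of the asserted equality are continuous functions on $\Gamma_2$: the left-hand side $\psi(f)$ is a rational function, and the right-hand side $\tilde f := (r \cdot f) \circ \varphi^{-1}$ is continuous because $\varphi$ is a homeomorphism (a continuous bijection between compact Hausdorff spaces) and $t \mapsto r \cdot t$ is a homeomorphism of $\boldsymbol{R} \cup \{ \pm \infty \}$ for $r > 0$. Since $\Gamma_2 \setminus \Gamma_{2\infty}$ is dense in $\Gamma_2$, it therefore suffices to prove
\[
\psi(f)(\varphi(x)) = r \cdot f(x) \quad \text{for every } x \in \Gamma_1 \setminus \Gamma_{1\infty}.
\]
The case $f = -\infty$ is trivial, so I would assume $f \neq -\infty$, in which case $f(x) \in \boldsymbol{R}$ for every finite $x$.

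The key device is to \emph{extract} the value $f(x)$ as a global maximum detectable by the semiring structure. Fix a finite point $x$ and, by Lemma \ref{lem5}$(1)$, fix $\varepsilon > 0$ with $\psi(\operatorname{CF}(\{ x \}; \varepsilon)) = \operatorname{CF}(\{ \varphi(x) \}; r \cdot \varepsilon)$. For a positive integer $N$ set
\[
g_N := f \odot \operatorname{CF}(\{ x \}; \varepsilon)^{\odot N},
\]
so that $g_N(y) = f(y) - N \cdot \operatorname{min}\{ \varepsilon, \operatorname{dist}(y, x) \}$. I would then establish the elementary \emph{sharp-peak claim}: since $f$ has finitely many pieces with integer slopes, it is $S$-Lipschitz for some $S$, and for every $N$ exceeding both $S$ and $(\operatorname{max}\{ f(y) \,|\, y \in \Gamma_1 \} - f(x))/\varepsilon$, the function $g_N$ attains its global maximum, with value $f(x)$. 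Indeed, the bound $f(y) \le f(x) + S \cdot \operatorname{dist}(x, y)$ forces $g_N(y) < f(x)$ for $y \neq x$ inside the $\varepsilon$-neighborhood of $x$, while outside that neighborhood $g_N \le \operatorname{max}\{ f(y) \,|\, y \in \Gamma_1 \} - N \varepsilon < f(x)$.

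With the claim in hand, Lemma \ref{lem4}$(1)$ gives, for all large $N$,
\[
\operatorname{max}\{ \psi(g_N)(x^{\prime}) \,|\, x^{\prime} \in \Gamma_2 \} = r \cdot \operatorname{max}\{ g_N(y) \,|\, y \in \Gamma_1 \} = r \cdot f(x).
\]
On the other hand, since $\psi$ is a semiring homomorphism and $\psi(\operatorname{CF}(\{ x \}; \varepsilon)) = \operatorname{CF}(\{ \varphi(x) \}; r \cdot \varepsilon)$, we have
\[
\psi(g_N) = \psi(f) \odot \operatorname{CF}(\{ \varphi(x) \}; r \cdot \varepsilon)^{\odot N}.
\]
Applying the same sharp-peak claim to the rational function $\psi(f)$ at the finite point $\varphi(x)$ (finite by Lemma \ref{lem5}$(1)$) with radius $r \cdot \varepsilon$ shows that, for all large $N$, the global maximum of $\psi(g_N)$ equals $\psi(f)(\varphi(x))$. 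Comparing the two evaluations of $\operatorname{max}\{ \psi(g_N)(x^{\prime}) \,|\, x^{\prime} \in \Gamma_2 \}$ for $N$ large yields $\psi(f)(\varphi(x)) = r \cdot f(x)$, which is exactly the pointwise identity on finite points.

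I expect the main, though routine, obstacle to be the sharp-peak claim: one must check carefully that for large $N$ the chip-firing penalty $N \cdot \operatorname{dist}(\cdot, x)$ dominates the bounded variation of $f$, both inside the $\varepsilon$-neighborhood (via the Lipschitz estimate along geodesics from $x$) and outside it (via the constant penalty $-N \varepsilon$). Everything else is a formal consequence of Lemmas \ref{lem4} and \ref{lem5}, together with the density of $\Gamma_2 \setminus \Gamma_{2\infty}$ and the continuity of both sides noted above.
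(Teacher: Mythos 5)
Your route is genuinely different from the paper's: the paper invokes the finite-generation theorem of \cite{JuAe3} (chip-firing moves of the two forms in Lemma \ref{lem5} generate $\operatorname{Rat}(\Gamma_1)$ over $\boldsymbol{T}$ as a tropical semifield), verifies the identity on those generators directly from Lemma \ref{lem5}, and propagates it through $\oplus$ and $\odot$; your evaluation argument instead extracts the value $f(x)$ as a global maximum and needs only Lemmas \ref{lem4} and \ref{lem5}$(1)$ together with the fact that $\varphi$ is a homeomorphism, which makes it self-contained. However, there is one concrete gap: your sharp-peak claim fails whenever $f$ takes the value $+\infty$ at some point at infinity, e.g.\ $f = \operatorname{CF}(\Gamma_1 \setminus (y, x]; \infty)^{\odot (-1)}$. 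For such $f$ the threshold $(\operatorname{max}\{ f(y) \,|\, y \in \Gamma_1 \} - f(x))/\varepsilon$ is infinite and no $N$ works: outside the $\varepsilon$-neighborhood of $x$ one has $g_N = f \odot (-N\varepsilon)$, which still equals $+\infty$ at that point at infinity, so $\operatorname{max} g_N = +\infty \not= f(x)$ for every $N$. On the $\Gamma_2$ side the same degeneration occurs ($\operatorname{max} \psi(g_N) = +\infty$ by Lemma \ref{lem4}, applied with the convention $r \cdot \infty = \infty$), so the comparison yields only $\infty = \infty$ and gives no information at $x$. Note that such unbounded $f$ genuinely occur unless $\Gamma_{1\infty} = \varnothing$, so this is not a vacuous case.

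The gap is easily repaired by the truncation device the paper itself uses in the proof of Lemma \ref{lem5}: for $c \in \boldsymbol{R}$ with $c > f(x)$, set $f_c := (f^{\odot(-1)} \oplus (-c))^{\odot(-1)} = \operatorname{min}\{ f, c \}$, which is bounded above by $c$. Since $\psi(-c) = -r \cdot c$ (Remark \ref{rem2}), we get $\psi(f_c) = (\psi(f)^{\odot(-1)} \oplus (-r \cdot c))^{\odot(-1)} = \operatorname{min}\{ \psi(f), r \cdot c \}$, which is bounded above by $r \cdot c$ (consistently with Lemma \ref{lem4}), so your sharp-peak argument applies verbatim to $f_c$ and $\psi(f_c)$ and yields $\operatorname{min}\{ \psi(f)(\varphi(x)), r \cdot c \} = r \cdot f_c(x) = r \cdot f(x) < r \cdot c$, forcing $\psi(f)(\varphi(x)) = r \cdot f(x)$. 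With this one-line patch, the rest of your proof is sound: the Lipschitz estimate along a geodesic inside the $\varepsilon$-neighborhood is valid (that neighborhood contains no points at infinity), $\varphi$ is indeed a homeomorphism (a continuous bijection of compact Hausdorff spaces), and the density of $\Gamma_2 \setminus \Gamma_{2\infty}$ in $\Gamma_2$ plus continuity of both sides legitimately upgrades the pointwise identity at finite points to all of $\Gamma_2$.
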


In the proof of Corollary \ref{cor5}, we will use the proof of the main theorem of \cite{JuAe3} ``for a tropical curve $\Gamma$, $\operatorname{Rat}(\Gamma)$ is finitely generated over $\boldsymbol{T}$ as a tropical semifield"; in that proof, the author showed that any set $A$ of chip-firing moves on $\Gamma_1$ of the forms in Lemma \ref{lem5} which induces a surjection $A \twoheadrightarrow \Gamma$ generates $\operatorname{Rat}(\Gamma)$ over $\boldsymbol{T}$ as a tropical semifield.

\begin{proof}[Proof of Corollary \ref{cor5}]
Let $f$ and $g$ be chip-firing moves on $\Gamma_1$ of the forms in Lemma \ref{lem5}(1) and (2), respectively.
By Lemma \ref{lem5} and the definition of $\varphi$, we have 
\[
\psi(f) = (r \cdot f) \circ \varphi^{-1} \quad \text{and} \quad \psi(g) = (r \cdot g) \circ \varphi^{-1}.
\]
Since all of such $f$ and $g$ generate $\operatorname{Rat}(\Gamma_1)$ over $\boldsymbol{T}$ as a tropical semifield, for any $h \in \operatorname{Rat}(\Gamma_1)$, we have $\psi(h) = (r \cdot h) \circ \varphi^{-1}$.
In fact, if $\psi(\widetilde{f}) = (r \cdot \widetilde{f}) \circ \varphi^{-1}$ and $\psi(\widetilde{g}) = (r \cdot \widetilde{g}) \circ \varphi^{-1}$, then we have
\begin{eqnarray*}
\psi(\widetilde{f} \oplus \widetilde{g}) &=& \psi(\widetilde{f}) \oplus \psi(\widetilde{g}) \\
&=& (r \cdot \widetilde{f}) \circ \varphi^{-1} \oplus (r \cdot \widetilde{g}) \circ \varphi^{-1} \\
&=& ((r \cdot \widetilde{f}) \oplus (r \cdot \widetilde{g})) \circ \varphi^{-1} \\
&=& (r \cdot (\widetilde{f} \oplus \widetilde{g})) \circ \varphi^{-1}
\end{eqnarray*}
and
\begin{eqnarray*}
\psi(\widetilde{f} \odot \widetilde{g}) &=& \psi(\widetilde{f}) \odot \psi(\widetilde{g}) \\
&=& (r \cdot \widetilde{f}) \circ \varphi^{-1} \odot (r \cdot \widetilde{g}) \circ \varphi^{-1} \\
&=& ((r \cdot \widetilde{f}) \odot (r \cdot \widetilde{g})) \circ \varphi^{-1} \\
&=& (r \cdot (\widetilde{f} \odot \widetilde{g})) \circ \varphi^{-1}.
\end{eqnarray*}
Hence, we have the conclusion.
\end{proof}

It is easy to check that the converse of Theorem \ref{thm1} holds:

\begin{lemma}
	\label{lem6}
Let $\Gamma_1, \Gamma_2$ be tropical curves.
An $r$-expansive map $\varphi : \Gamma_1 \to \Gamma_2$ with $r \in \boldsymbol{R}_{>0}$ induces the semiring isomorphism 
\[
\psi : \operatorname{Rat}(\Gamma_1) \to \operatorname{Rat}(\Gamma_2); \qquad f \mapsto \left( r \cdot f \right) \circ \varphi^{-1}
\]
and $\psi(1) = r$.
\end{lemma}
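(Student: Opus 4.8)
The plan is to verify in turn that the prescribed assignment $\psi \colon f \mapsto (r \cdot f) \circ \varphi^{-1}$ is well-defined, is a semiring homomorphism, and is bijective, and finally to read off $\psi(1) = r$. First I would record two preliminary facts about $\varphi$. Since a tropical curve is compact and Hausdorff, the continuous bijection $\varphi$ is in fact a homeomorphism, so $\varphi^{-1}$ is continuous; moreover, rewriting $r \cdot \operatorname{dist}(x, y) = \operatorname{dist}(\varphi(x), \varphi(y))$ with $x = \varphi^{-1}(x')$ and $y = \varphi^{-1}(y')$ shows that $\varphi^{-1}$ is a $\tfrac{1}{r}$-expansive map $\Gamma_2 \to \Gamma_1$. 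I would also note that, being an $r$-expansive map, $\varphi$ restricts to a bijection $\Gamma_{1 \infty} \to \Gamma_{2 \infty}$ and to a metric isomorphism of the finite parts.

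The crux is well-definedness: I must check that $\psi(f) = (r \cdot f)\circ\varphi^{-1}$ is a rational function on $\Gamma_2$. If $f = -\infty$ then $\psi(f) = -\infty$, so assume $f$ is piecewise affine with integer slopes. Continuity of $\psi(f)$ is immediate from the continuity of $f$, of multiplication by $r$, and of $\varphi^{-1}$. Because $\varphi^{-1}$ sends each affine piece of the finite part of $\Gamma_2$ to an affine piece of $\Gamma_1$, the function $\psi(f)$ is piecewise affine with finitely many pieces, and it takes the values $\pm\infty$ exactly at the $\varphi$-images of the points where $f$ does, hence only on $\Gamma_{2 \infty}$. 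The one point that genuinely needs the hypothesis is that the slopes remain integers: if $f$ has integer slope $s$ along a segment, then for $x', y'$ on the corresponding segment of $\Gamma_2$ the slope of $\psi(f)$ is
\[
\frac{r \cdot f(\varphi^{-1}(y')) - r \cdot f(\varphi^{-1}(x'))}{\operatorname{dist}(x', y')} = \frac{r\bigl(f(\varphi^{-1}(y')) - f(\varphi^{-1}(x'))\bigr)}{r \cdot \operatorname{dist}(\varphi^{-1}(x'), \varphi^{-1}(y'))} = s,
\]
so the factor $r$ in the values exactly cancels the contraction factor $\tfrac{1}{r}$ coming from $\varphi^{-1}$, leaving the integer slope $s$. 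This is the main obstacle, and essentially the whole content of the lemma.

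Once well-definedness is in hand, the homomorphism axioms are routine pointwise checks at finite points, extended to $\Gamma_{2 \infty}$ by continuity: since $r > 0$, multiplication by $r$ commutes with $\max$, giving $\psi(f \oplus g) = \psi(f) \oplus \psi(g)$, and since it distributes over $+$, we get $\psi(f \odot g) = \psi(f) \odot \psi(g)$; the additive identity $-\infty$ and the multiplicative identity (the constant function $0$) are each sent to themselves. For bijectivity I would apply the same construction to the $\tfrac{1}{r}$-expansive map $\varphi^{-1}$, obtaining $\psi' \colon \operatorname{Rat}(\Gamma_2) \to \operatorname{Rat}(\Gamma_1); \ g \mapsto (\tfrac{1}{r}\cdot g)\circ\varphi$, which is well-defined by the same slope computation. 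A direct computation gives $\psi'(\psi(f)) = (\tfrac{1}{r}\cdot((r\cdot f)\circ\varphi^{-1}))\circ\varphi = f$ and symmetrically $\psi(\psi'(g)) = g$, so $\psi' = \psi^{-1}$ and $\psi$ is a semiring isomorphism. Finally, applying $\psi$ to the constant function $1$ yields $\psi(1) = (r \cdot 1)\circ\varphi^{-1} = r$, completing the proof.
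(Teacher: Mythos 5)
Your proof is correct and takes essentially the same approach as the paper's: well-definedness via the observation that the factor $r$ in the values cancels the $\tfrac{1}{r}$-contraction of $\varphi^{-1}$ so slopes on corresponding intervals coincide, bijectivity via the explicit inverse $g \mapsto (\tfrac{1}{r}\cdot g)\circ\varphi$ (the paper's $\theta$), and the homomorphism axioms as a routine check. You are in fact somewhat more detailed than the paper (which leaves both the slope matching and the homomorphism verification as ``straightforward''), and you correctly read $\psi(1)=r$ with $1$ the constant function one rather than the tropical multiplicative identity $0$.
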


\begin{proof}
Since $\varphi^{-1}$ is surjective, $\psi(f)$ is a function $\Gamma_2 \to \boldsymbol{R} \cup \{ \pm \infty \}$.
For $x, y \in \Gamma_1$, since $r \cdot \operatorname{dist}(x, y) = \operatorname{dist}(\varphi(x), \varphi(y))$, slopes of $f$ and $\psi(f)$ on intervals corresponding by $\varphi$ coincide.
Thus $\psi(f)$ is a rational function on $\Gamma_2$.

Let 
\[
\theta : \operatorname{Rat}(\Gamma_2) \to \operatorname{Rat}(\Gamma_1); \qquad f^{\prime} \mapsto \left( \frac{1}{r} \cdot f^{\prime} \right) \circ \varphi.
\]
By a similar argument above, we have $\theta (f^{\prime}) \in \operatorname{Rat}(\Gamma_1)$ for any $f^{\prime} \in \operatorname{Rat}(\Gamma_2)$.
By definition, we have
\[
\theta \circ \psi = \operatorname{id}_{\operatorname{Rat}(\Gamma_1)} \quad \text{and} \quad \psi \circ \theta = \operatorname{id}_{\operatorname{Rat}(\Gamma_2)},
\]
where $\operatorname{id}_{\operatorname{Rat}(\Gamma_i)}$ denotes the identity map of $\operatorname{Rat}(\Gamma_i)$.
Hence $\psi$ is bijective.

The proof that $\psi$ is a semiring homomorphism is straightforward.
\end{proof}

Now we can prove Corollary \ref{cor1}:

\begin{proof}[Proof of Corollary \ref{cor1}]
Clearly, both $\mathscr{C}, \mathscr{D}$ are groupoids.

Let 
\[
F: \mathscr{C} \to \mathscr{D}
\]
be
\[
\operatorname{Ob}(\mathscr{C}) \to \operatorname{Ob}(\mathscr{D}); \qquad \Gamma \mapsto \Gamma
\]
and for $\Gamma_1, \Gamma_2 \in \operatorname{Ob}(\mathscr{C})$, 
\[
\operatorname{Hom}_{\mathscr{C}}(\Gamma_1, \Gamma_2) \to \operatorname{Hom}_{\mathscr{D}}(\Gamma_1, \Gamma_2); \qquad \psi \mapsto \varphi,
\]
where $\varphi$ is the $r$-expansive map $\Gamma_1 \to \Gamma_2$ defined as in Theorem \ref{thm1}.
Let 
\[
G: \mathscr{D} \to \mathscr{C}
\]
be
\[
\operatorname{Ob}(\mathscr{D}) \to \operatorname{Ob}(\mathscr{C}); \qquad \Gamma \to \Gamma
\]
 and for $\Gamma_1, \Gamma_2 \in \operatorname{Ob}(\mathscr{D})$, 
\[
\operatorname{Hom}_{\mathscr{D}}(\Gamma_1, \Gamma_2) \to \operatorname{Hom}_{\mathscr{C}}(\Gamma_1, \Gamma_2); \qquad \varphi \mapsto \left( f \mapsto (r \cdot f) \circ \varphi^{-1} \right), 
\]
where $\varphi$ is an $r$-expansive map.
By Theorem \ref{thm1} and Lemma \ref{lem6}, $F$ and $G$ are defined.
Then $F$ and $G$ are (covariant) functors.
By Corollary \ref{cor5}, we have $G \circ F = \operatorname{id}_{\mathscr{C}}$.
Clearly, $F \circ G = \operatorname{id}_{\mathscr{D}}$ holds.
\end{proof}

Finally, we will describe all semiring automorphisms of tropical curves.
To do so, we start characterizing star-shaped tropical curves consisting of a finite number ($\ge 1$) of $[0, \infty]$:

\begin{lemma}
	\label{lem7}
Let $\Gamma$ be a tropical curve.
Then the following are equivalent:

$(1)$ there exists a model $(G, l)$ for $\Gamma$ such that $l(E(G)) = \{ \infty \}$; and

$(2)$ $\Gamma$ is a star-shaped tropical curve consisting of a finite number ($\ge 1$) of $[0, \infty]$.
\end{lemma}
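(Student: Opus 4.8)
The plan is to prove the two implications separately; the substantive content lies in $(1)\Rightarrow(2)$, which reduces to a purely graph-theoretic structure result.

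For $(2)\Rightarrow(1)$ I would simply exhibit a model. If $\Gamma$ is obtained by gluing $n\ge 1$ copies of $[0,\infty]$ along their finite endpoints at a single central point, take $G$ to be the star on a central vertex $v$ together with one leaf end for each copy, and assign every edge the length $\infty$. Each edge is incident to a leaf end, hence is a leaf edge, so the assignment is admissible and $l(E(G))=\{\infty\}$. (When $n=1$ this is the single edge with $l=\infty$, where either endpoint may be taken as the point at infinity, and when $n=2$ it is the tree on three vertices and two edges singled out in Subsection~\ref{subsection2.2}.)

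For $(1)\Rightarrow(2)$, suppose $(G,l)$ is a model with $l(E(G))=\{\infty\}$. Since the length function may take the value $\infty$ only on leaf edges, every edge of $G$ is a leaf edge, so $G$ is loopless, has at least one edge, and every edge is incident to a vertex of valence one. The heart of the argument is the claim that such a connected graph is a star. First I would call a vertex \emph{internal} if it has valence at least two and observe that, since each edge has a valence-one endpoint, no edge joins two internal vertices; thus the internal vertices form an independent set. Next I would show that $G$ has at most one internal vertex: if there were two, connectedness would produce a path joining them, but then its first edge either joins two internal vertices directly (contradicting the independent-set property) or forces an interior vertex of the path to have valence one (contradicting that interior path vertices have valence at least two). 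Finally I would treat the two remaining cases: if $G$ has no internal vertex then every vertex has valence one and connectedness gives $G=K_2$ (a single edge), while if $G$ has a unique internal vertex $v$ then any edge not incident to $v$ would join two leaves and form an isolated component, so by connectedness (together with $\#E(G)\ge 2$) every edge is incident to $v$, whence $G$ is the star $K_{1,n}$ with $n=\operatorname{val}(v)$. Reading this back on the tropical side, each weight-$\infty$ leaf edge becomes a copy of $[0,\infty]$, and these copies are glued at the central vertex, so $\Gamma$ is a star-shaped tropical curve consisting of $n\ge 1$ copies of $[0,\infty]$, which is exactly $(2)$.

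The only real difficulty is the graph-theoretic lemma ``every edge is a leaf edge'' $\Rightarrow$ ``$G$ is a star,'' and in particular keeping track of the degenerate low-edge cases ($n=1$, where $G=K_2$ has no internal vertex, and $n=2$, the path on three vertices). I expect the step ruling out two internal vertices to be the crux, although it becomes elementary once phrased through the independent-set observation.
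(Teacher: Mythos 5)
Your proof is correct, and it takes a genuinely different route from the paper's. You prove $(1)\Rightarrow(2)$ directly: since $l$ may take the value $\infty$ only on leaf edges, the hypothesis $l(E(G))=\{\infty\}$ forces every edge of $G$ to be a leaf edge (in particular $G$ is loopless, since a loop's unique endpoint has valence at least two), and you then establish the purely combinatorial fact that a connected multigraph in which every edge has a valence-one endpoint is either $K_2$ or a star $K_{1,n}$; your independent-set observation and the path argument both check out, and the independent-set property also silently disposes of parallel edges (a doubled edge would join two vertices of valence at least two). The paper instead proves the contrapositive $\neg(2)\Rightarrow\neg(1)$ by a case analysis on the global structure of $\Gamma$: genus at least one; a singleton; a genus-zero curve with no valence-$\ge 3$ points (a finite segment, once the stars with $n=1,2$ are excluded); exactly one valence-$\ge 3$ point; and at least two valence-$\ge 3$ points --- in each case locating an edge of the canonical model $(G_{\circ}, l_{\circ})$ of finite length. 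Your version is more self-contained: you reason about the given model itself, so you never need the bridging fact (intuitively clear but left implicit in the paper) that a finite-length edge of the canonical model obstructs \emph{every} model from having all edge lengths infinite; the paper's case analysis, on the other hand, has the side benefit of cataloguing exactly how $(1)$ fails for each structural type of curve, which dovetails with how the lemma is exploited in the proof of Corollary \ref{cor6}. Both arguments handle the degenerate cases correctly: $n=1$ (the single edge $K_2$, with either leaf end identified with $\infty$) and $n=2$ (the three-vertex tree with two infinite edges singled out in Subsection \ref{subsection2.2}).
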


\begin{proof}
Clearly $(2)$ implies $(1)$.

Assume that $(2)$ does not hold.
If the genus $g(\Gamma)$ is at least one, then, clearly, $(1)$ does not hold.
Let $g(\Gamma) = 0$.
If $\Gamma$ is a singleton, then $(1)$ does not hold.
Assume that $\Gamma$ is not a singleton.
If $\Gamma$ has no points of valence at least three, then there exists a unique number $a \in \boldsymbol{R}_{>0}$ such that $\Gamma$ is isometric to $[0, a]$.
Hence, in this case, $(1)$ does not hold.
Assume that $\Gamma$ has some points of valence at least three.
If such points are only one, then $\Gamma$ is star-shaped.
Since $(2)$ does not hold, the underlying graph $G_{\circ}$ of the canonical model $(G_{\circ}, l_{\circ})$ for $\Gamma$ has a leaf edge of a finite length.
Hence $(1)$ does not hold.
Assume that there exist at least two points $x, y$ of valence at least three.
Since there exist only a finite number of such points, we can assume that the only path $P$ from $x$ to $y$ contains no such points other than $x, y$.
Then $P$ is an edge of the $G_{\circ}$ and has a finite length.
This means that $(1)$ does not hold.
\end{proof}

\begin{cor}
	\label{cor6}
Let $\Gamma$ be a tropical curve.
Then the following are equivalent:

$(1)$ there exists a model $(G, l)$ for $\Gamma$ such that $l(E(G)) = \{ \infty \}$; and

$(2)$ $\operatorname{Aut}_{\boldsymbol{T}}(\operatorname{Rat}(\Gamma)) \not= \operatorname{Aut}_{\rm semiring}(\operatorname{Rat}(\Gamma))$.
\end{cor}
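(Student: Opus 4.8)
The plan is to prove the equivalence of Corollary \ref{cor6} by reducing it to Lemma \ref{lem7}, which already characterizes the tropical curves satisfying condition $(1)$ as precisely the star-shaped tropical curves built from finitely many copies of $[0, \infty]$. Since both corollary statements share the identical condition $(1)$, it suffices to show that condition $(2)$ of Corollary \ref{cor6}, namely $\operatorname{Aut}_{\boldsymbol{T}}(\operatorname{Rat}(\Gamma)) \neq \operatorname{Aut}_{\rm semiring}(\operatorname{Rat}(\Gamma))$, is equivalent to $\Gamma$ being such a star-shaped curve. I would therefore prove the two directions by relating both automorphism groups, via Theorem \ref{thm1}, to geometric maps on $\Gamma$.

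\textbf{The forward direction $(1) \Rightarrow (2)$.} Suppose $\Gamma$ is a finite union of copies of $[0, \infty]$ glued at a single finite central point. By Theorem \ref{thm1} and Corollary \ref{cor4}, every semiring automorphism $\psi$ of $\operatorname{Rat}(\Gamma)$ restricts on the constants to the $r$-expansive map of $\boldsymbol{T}$ for $r = \psi(1)$, and induces an $r$-expansive map $\varphi : \Gamma \to \Gamma$; conversely, by Lemma \ref{lem6}, every $r$-expansive self-map of $\Gamma$ arises this way. Thus $\operatorname{Aut}_{\rm semiring}(\operatorname{Rat}(\Gamma))$ strictly contains $\operatorname{Aut}_{\boldsymbol{T}}(\operatorname{Rat}(\Gamma))$ exactly when $\Gamma$ admits an $r$-expansive self-map with $r \neq 1$. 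For the star-shaped curve, I would exhibit such a map explicitly: fix the central vertex and scale each ray $[0, \infty] \cong [0, \infty]$ by a factor $r \neq 1$. Because every edge has infinite length, this scaling is a well-defined continuous bijection inducing a metric isomorphism of $\Gamma \setminus \Gamma_\infty$ with expansion factor $r$, so the induced semiring automorphism lies in $\operatorname{Aut}_{\rm semiring} \setminus \operatorname{Aut}_{\boldsymbol{T}}$, giving $(2)$.

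\textbf{The contrapositive direction $\lnot(1) \Rightarrow \lnot(2)$.} Here I would argue that if $\Gamma$ is \emph{not} of this form, then every $r$-expansive self-map must have $r = 1$, whence by Lemma \ref{lem6} and Theorem \ref{thm1} the two automorphism groups coincide. The key geometric fact is that an $r$-expansive self-map multiplies all finite edge lengths by $r$; but $\Gamma$ is obtained from a fixed canonical model $(G_\circ, l_\circ)$ whose finite edge-length multiset is an invariant of $\Gamma$. By Lemma \ref{lem7}, since $(1)$ fails, $(G_\circ, l_\circ)$ possesses at least one edge of finite length. Since an $r$-expansive self-map $\varphi$ permutes the finitely many edges of the canonical model and scales each finite length by the single factor $r$, the finite part of the length multiset is simultaneously preserved (as a self-bijection of $\Gamma$) and dilated by $r$; a finite nonempty multiset of positive reals cannot be invariant under multiplication by $r \neq 1$, forcing $r = 1$. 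Hence $\varphi$ is an isometry on $\Gamma \setminus \Gamma_\infty$, i.e., an automorphism of $\Gamma$ in the sense of Remark \ref{rem1}, and the corresponding $\psi$ fixes the constants, so $\psi \in \operatorname{Aut}_{\boldsymbol{T}}(\operatorname{Rat}(\Gamma))$; this yields $\operatorname{Aut}_{\boldsymbol{T}} = \operatorname{Aut}_{\rm semiring}$, i.e. $\lnot(2)$.

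\textbf{Main obstacle.} The delicate point is the rigorous bookkeeping in the contrapositive direction: I must ensure that the finite-length edges of the canonical model genuinely form a multiset invariant under any self-homeomorphism, so that scaling by $r$ is impossible unless $r = 1$. This requires invoking that the canonical model is intrinsic to $\Gamma$ (it is determined by the valence structure, as set up in Subsection \ref{subsection2.2}) and that an expansive self-map sends the canonical model to itself up to relabeling. Once this invariance is secured the arithmetic contradiction is immediate, but I expect verifying the model-level compatibility — particularly handling the exceptional canonical-model cases (the circle and the three-vertex all-infinite tree) flagged in the definition — to be where the real care is needed.
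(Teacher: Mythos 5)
Your strategy is essentially the paper's own: the forward direction is identical (a dilatation of the star-shaped curve fixing the center, fed through Lemma \ref{lem6} to produce $\psi$ with $\psi(1) = r \neq 1$), and your contrapositive uses the same mechanism the paper does, namely that an $r$-expansive self-map must carry edges of the canonical model to edges of the canonical model while scaling finite lengths by $r$. The only difference is cosmetic: the paper derives the contradiction by iterating, producing $e, \varphi(e), \varphi^2(e), \ldots \in E(G_\circ)$ with strictly increasing lengths and hence $\# E(G_\circ) = \infty$, whereas you compare the finite-length multiset with its $r$-dilate; both rest on the same invariance of $E(G_\circ)$ under a self-homeomorphism.

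However, two cases in your contrapositive are genuinely unresolved, and one of them cannot be deferred with a ``care is needed'' remark. First, the circle: there $V(G_\circ)$ is a single \emph{arbitrarily chosen} point, not the valence-determined set $\{x \mid \operatorname{val}(x) \neq 2\}$, so a self-homeomorphism need not preserve the canonical model at all, and your key step ``$\varphi$ permutes the finitely many edges of the canonical model'' is unavailable exactly there. You flagged this, but flagging is not proving; the fix is short (the total circumference $L$ satisfies $rL = L$, or the paper's version via antipodal points and the two paths joining them), and it must be supplied, since the circle satisfies $\lnot(1)$. Your other flagged exception, the three-vertex all-infinite tree, is moot: it is the $n = 2$ star and lies in case $(1)$. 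Second, the singleton: on any literal reading it satisfies $\lnot(1)$ (its canonical model has no edges, so your finite-length multiset is empty and yields nothing), yet $(2)$ \emph{holds} for it, since $\operatorname{Rat}(\Gamma) = \boldsymbol{T}$ and $\operatorname{Aut}_{\rm semiring}(\boldsymbol{T}) \supsetneq \operatorname{Aut}_{\boldsymbol{T}}(\boldsymbol{T})$ by Corollary \ref{cor4}; so the implication $\lnot(1) \Rightarrow \lnot(2)$ as you state it fails for the singleton. The paper dispatches the singleton separately at the outset, asserting it belongs to case $(1)$ (in some tension with the proof of Lemma \ref{lem7}, admittedly), and your proof likewise needs an explicit convention or a separate treatment for this curve rather than silence; once the circle and the singleton are handled, the rest of your argument, including the reduction of $(2)$ to the existence of an $r$-expansive self-map with $r \neq 1$ via Theorem \ref{thm1}, Remark \ref{rem2} and Lemma \ref{lem6}, is sound and matches the paper.
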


\begin{proof}
If $\Gamma$ is a singleton, then $\Gamma$ satisfies both $(1)$ and $(2)$ by Corollary \ref{cor4} since $\operatorname{Rat}(\Gamma) = \boldsymbol{T}$.

Assume that $\Gamma$ is not a singleton.
We will show that if $(1)$ does not hold, then $(2)$ does also not hold.
$\operatorname{Aut}_{\boldsymbol{T}}(\operatorname{Rat}(\Gamma))$ is a subgroup of $\operatorname{Aut}_{\rm semiring}(\operatorname{Rat}(\Gamma))$.
For $\psi \in \operatorname{Aut}_{\rm semiring}(\operatorname{Rat}(\Gamma))$, let $r := \psi(1)$ and $\varphi$ be the $r$-expansive map $\Gamma \to \Gamma$ corresponding to $\psi$.
Suppose that $\Gamma$ is homeomorphic to a circle $S^1$.
Let $x, y$ be antipodal points and $P_1, P_2$ be the distinct paths from $x$ to $y$.
Then these paths have the same length $\operatorname{dist}(x, y)$.
Hence $\varphi(P_1)$ and $\varphi(P_2)$ are distinct paths from $\varphi(x)$ to $\varphi(y)$ of length $r \cdot \operatorname{dist}(x, y)$.
Therefore we have $r = 1$ and thus $\psi \in \operatorname{Aut}_{\rm semiring}(\operatorname{Rat}(\Gamma))$.
If $\Gamma$ is not homeomorphic to a circle $S^1$, then $\Gamma$ has at least one point of valence other than two.
For the canonical model $(G_{\circ}, l_{\circ})$ for $\Gamma$, there exists an edge $e \in E(G_{\circ})$ suth that $l_{\circ}(e)$ is finite since $(1)$ does not hold.
By Lemma \ref{lem7}, the endpoints $v, w$ of $e$ (possibly $v = w$) have valences other than two.
Since $\varphi$ is a homeomorphism, it respects valences, and thus $\varphi(e) \in E(G_{\circ})$.
If $r > 1$, then $l_{\circ}(e) < l_{\circ}(\varphi(e))$.
By the same argument, we have the following sequences: 
\[
e, \varphi(e), \varphi^2(e), \ldots \in E(G_{\circ})
\]
 and 
\[
l_{\circ}(e) < l_{\circ}(\varphi(e)) < l_{\circ}(\varphi^2(e)) < \cdots.
\]
These mean that $\# E(G_{\circ}) = \infty$.
It is a contradiction.
When $r < 1$, we have a contradiction by a similar argument.
Hence $r$ must be one, and thus $(2)$ does not hold.

Assume that $(1)$ holds.
Then by Lemma \ref{lem7}, $\Gamma$ is a star-shaped tropical curve consisting of a finite number ($\ge 1$) of $[0, \infty]$.
Clearly, for any $r \in \boldsymbol{R}_{>0}$, there exists an $r$-expansive map $\Gamma \to \Gamma$ that is a dilatation.
By Lemma \ref{lem6}, this induces a semiring automorphism $\psi : \operatorname{Rat}(\Gamma) \to \operatorname{Rat}(\Gamma)$ such that $\psi(1) = r$.
Thus $\operatorname{Aut}_{\boldsymbol{T}}(\operatorname{Rat}(\Gamma))$ is a proper subgroup of $\operatorname{Aut}_{\rm semiring}(\operatorname{Rat}(\Gamma))$.
\end{proof}

\begin{prop}
	\label{prop2}
Let $\Gamma$ be a star-shaped tropical curve consisting of a finite number $n \ge 1$ of $[0, \infty]$.

$(1)$ If $n \not= 2$, then 
\[
\operatorname{Aut}(\Gamma) \cong S_n
\]
and 
\[
\operatorname{Aut}_{\rm semiring}(\operatorname{Rat}(\Gamma)) \cong \operatorname{Aut}(\Gamma) \times \boldsymbol{R}_{>0},
\]
where $S_n$ denotes the symmetric group of degree $n$ and $\boldsymbol{R}_{>0}$ is regarded as the group $\boldsymbol{R}_{>0}$ with the usual multiplication as its binary operation.

$(2)$ Let $n = 2$ and $\Gamma$ be identified with $[ -\infty, \infty ]$ and $\iota$ be the inversion with respect to $0$.
Then 
\[
\operatorname{Aut}(\Gamma) = \langle \{ \text{the inversion with respect to }x \,|\, x \in \Gamma \setminus \Gamma_{\infty} \} \rangle \cong \boldsymbol{R} \rtimes \langle \iota \rangle
\]
and 
\[
\operatorname{Aut}_{\rm semiring}(\operatorname{Rat}(\Gamma)) \cong \operatorname{Aut}(\Gamma) \rtimes \boldsymbol{R}_{>0},
\]
where $\langle \cdot \rangle$ denotes the group generated by $\cdot$, and $\boldsymbol{R}$ is regarded as the group $\boldsymbol{R}$ with the usual addition as its binary operation, and $\boldsymbol{R}_{>0}$ as the group $\boldsymbol{R}_{>0}$ with the usual multiplication as its binary operation.
\end{prop}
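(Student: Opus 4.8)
The plan is to reduce both claims to a description of the group $E(\Gamma)$ of all \emph{expansive self-maps} of $\Gamma$, that is, all $r$-expansive maps $\Gamma \to \Gamma$ for every $r \in \boldsymbol{R}_{>0}$, composed as maps. Applying Corollary \ref{cor1} to the one-object case $\Gamma_1 = \Gamma_2 = \Gamma$, the assignment $\psi \mapsto \varphi$ with $\psi(f) = (\psi(1) \cdot f) \circ \varphi^{-1}$ is a functor between one-object groupoids, hence a group isomorphism
\[
\operatorname{Aut}_{\rm semiring}(\operatorname{Rat}(\Gamma)) \xrightarrow{\sim} E(\Gamma).
\]
Thus it suffices to determine $E(\Gamma)$ and, inside it, the subgroup $\operatorname{Aut}(\Gamma)$, which by Remark \ref{rem1} is precisely the set of $1$-expansive maps.

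To organize $E(\Gamma)$ I would use the expansion factor. Sending an $r$-expansive map to $r$ defines a homomorphism $\rho : E(\Gamma) \to \boldsymbol{R}_{>0}$, since the composite of an $r_1$- and an $r_2$-expansive map is $(r_1 r_2)$-expansive; it is surjective and its kernel is $\operatorname{Aut}(\Gamma)$. For each $r$ the dilatation of $\Gamma$ centered at the central vertex $v$ (the common finite endpoint of the rays; the origin $0$ when $n = 2$) is an $r$-expansive self-map, and these dilatations give a section of $\rho$. Hence the short exact sequence
\[
1 \longrightarrow \operatorname{Aut}(\Gamma) \longrightarrow E(\Gamma) \stackrel{\rho}{\longrightarrow} \boldsymbol{R}_{>0} \longrightarrow 1
\]
splits, so $E(\Gamma) \cong \operatorname{Aut}(\Gamma) \rtimes \boldsymbol{R}_{>0}$. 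It then remains to compute $\operatorname{Aut}(\Gamma)$ and to decide whether the conjugation action of the dilatation section on $\operatorname{Aut}(\Gamma)$ is trivial.

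For $n \neq 2$, every element of $E(\Gamma)$ is a self-homeomorphism of $\Gamma$ and therefore preserves valence; since $v$ is the unique finite point of valence $\neq 2$, each such map fixes $v$, permutes the $n$ rays through $v$, and scales radially by its factor $r$. This yields $\operatorname{Aut}(\Gamma) \cong S_n$, proving the first isomorphism of $(1)$. For $n = 2$ I would identify $\Gamma$ with $[-\infty, \infty]$ and $\Gamma \setminus \Gamma_\infty$ with $\boldsymbol{R}$; an $r$-expansive self-map restricts to a similarity $x \mapsto \pm r x + c$ of $\boldsymbol{R}$, and conversely each such similarity extends continuously over $\pm\infty$. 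Hence $\operatorname{Aut}(\Gamma)$ is the isometry group $\{ x \mapsto \pm x + c \}$, which is generated by the inversions $x \mapsto -x + 2x_0$ and is isomorphic to $\boldsymbol{R} \rtimes \langle \iota \rangle$; this gives the description of $\operatorname{Aut}(\Gamma)$ in $(2)$.

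The crux, and the only place the two cases truly diverge, is the conjugation action in the split sequence. For $n \neq 2$ the $v$-centered radial dilatation commutes with every permutation of the rays, so the action is trivial and $E(\Gamma) \cong S_n \times \boldsymbol{R}_{>0}$, completing $(1)$. For $n = 2$, writing $g_{a,c} : x \mapsto ax + c$, one computes $g_{r,0}\, g_{\epsilon, c}\, g_{r,0}^{-1} = g_{\epsilon, r c}$ for $\epsilon \in \{ \pm 1 \}$, so conjugation by the dilatation fixes $\iota$ but rescales translations; the action is nontrivial, the extension is a genuine semidirect product, and $E(\Gamma) \cong \operatorname{Aut}(\Gamma) \rtimes \boldsymbol{R}_{>0}$, completing $(2)$. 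The conceptual reason for the difference is that a genuine star ($n \neq 2$) has a canonical fixed center about which to dilate and whose combinatorial symmetries commute with that dilation, whereas for $n = 2$ the isometry group contains translations and no choice of dilation center commutes with them.
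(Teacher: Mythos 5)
Your proposal is correct and follows essentially the same route as the paper: identify $\operatorname{Aut}_{\rm semiring}(\operatorname{Rat}(\Gamma))$ with the group of all $r$-expansive self-maps (via Corollary \ref{cor1}/\ref{cor2}), decompose each such map as an element of $\operatorname{Aut}(\Gamma)$ composed with a dilatation centered at the canonical fixed point (the valence argument for $n \neq 2$, the similarity classification $x \mapsto \pm rx + c$ for $n = 2$), and then check whether the dilatations commute with $\operatorname{Aut}(\Gamma)$. Your packaging via the expansion-factor homomorphism $\rho$ and the split exact sequence is a cleaner uniform organization of the paper's case-by-case argument, and your explicit conjugation computation $g_{r,0}\, g_{\epsilon,c}\, g_{r,0}^{-1} = g_{\epsilon, rc}$ makes the nontriviality of the action for $n = 2$ more transparent than the paper's normality check, but the mathematical content is the same.
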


\begin{proof}
Let $R$ denote the set of all $r$-expansive maps $\Gamma \to \Gamma$ with all $r \in \boldsymbol{R}_{>0}$.
By Corollary \ref{cor2}, $R$ is isomorphic to $\operatorname{Aut}_{\rm semiring}(\operatorname{Rat}(\Gamma))$ as a group.

We shall prove $(1)$.
Let $n = 1$ and $\Gamma = [0, \infty]$.
For any $r > 0$ and any $r$-expansive map $\varphi : \Gamma \to \Gamma$, $\varphi(0)$ must be zero.
In fact, since $\varphi$ is a homeomorphism, $\varphi(0) = 0$ or $\infty$.
As $\operatorname{dist}(0, 1) = 1$, we have $\operatorname{dist}(\varphi(0), \varphi(1)) = r$.
As the point $\varphi(1)$ is in $(0, \infty)$, $\varphi(0)$ cannot be $\infty$.
Thus $R$ is isomorphic to $\boldsymbol{R}_{>0}$ as a group.
On the other hand, $\operatorname{Aut}(\Gamma)$ consists of only the identity of $\Gamma$.
Thus $\operatorname{Aut}_{\rm semiring}(\operatorname{Rat}(\Gamma))$ is isomorphic to $\operatorname{Aut}(\Gamma) \times \boldsymbol{R}_{>0}$ as a group.

Let $n \ge 3$.
There exists a unique point $x \in \Gamma$ of valence $n$.
Since any $r$-expansive map $\varphi : \Gamma \to \Gamma$ with any $r \in \boldsymbol{R}_{>0}$ is a homeomorphism, $\varphi(x) = x$ holds.
Thus $\operatorname{Aut}(\Gamma)$ is isomorphic to the symmetric group of degree $n$ as a group.
$\varphi$ defines the pair of an element of $\operatorname{Aut}(\Gamma)$ and a dilatation of $\Gamma$ with $r$ as its expansion factor.
Conversely, the pair of an element of $\operatorname{Aut}(\Gamma)$ and a dilatation of $\Gamma$ with expansion factor $r$ for each $r \in \boldsymbol{R}_{>0}$ defines an $r$-expansive map $\Gamma \to \Gamma$.
An element of $\operatorname{Aut}(\Gamma)$ and a dilatation of $\Gamma$ with expansion factor any $r \in \boldsymbol{R}_{>0}$ are commutative, and thus we have the conclusion.

We shall prove $(2)$.
Since $\Gamma = [- \infty, \infty]$, the first assertion is clear.
We show that $\operatorname{Aut}_{\rm semiring}(\operatorname{Rat}(\Gamma)) \cong \operatorname{Aut}(\Gamma) \rtimes \boldsymbol{R}_{>0}$.
Let $\varphi : \Gamma \to \Gamma$ be an $r$-expansive map with $r \in \boldsymbol{R}_{>0} \setminus \{ 1 \}$.
Then $\varphi$ has a unique fixed point $x$ other than $\pm \infty$.
In fact, it is easy to check that if $\varphi(\infty) = \infty$ and $\varphi(-\infty) = -\infty$, then $x = \frac{\varphi(0)}{1 - r}$, and that if $\varphi(\infty) = -\infty$ and $\varphi(-\infty) = \infty$, then $x = \frac{\varphi(0)}{1 + r}$.
In both cases, since $r \not= 1$, other finite points are not fixed.

We consider the group consisting of all dilatation $\theta_r : \Gamma \to \Gamma$ fixing $0$ with $r$ as its expansion factor for any $r \in \boldsymbol{R}_{>0}$, which is isomorphic to $\boldsymbol{R}_{>0}$ as a group.
If $\varphi(\infty) = \infty$ and $\varphi(-\infty) = -\infty$, then $\varphi = (\varphi_x \circ \operatorname{id}_{\Gamma}) \circ \theta_r$, where $\varphi_x$ denotes the translation $\Gamma \to \Gamma; y \mapsto y + x$.
If $\varphi(\infty) = -\infty$ and $\varphi(-\infty) = \infty$, then $\varphi = (\varphi_x \circ \iota) \circ \theta_r$.
Conversely, an element of $\operatorname{Aut}(\Gamma)\boldsymbol{R}_{>0}$ defines an element of $R$.
Thus $R = \operatorname{Aut}(\Gamma)\boldsymbol{R}_{>0}$.
Clearly both $\operatorname{Aut}(\Gamma)$ and $\boldsymbol{R}_{>0}$ are subgroups of $R$, and for $\psi \in R$ and $\varphi \in \operatorname{Aut}(\Gamma)$, $\psi \circ \varphi \circ \psi^{-1}$ is a $1$-expansive map, i.e., $\psi \circ \varphi \circ \psi^{-1}\in \operatorname{Aut}(\Gamma)$.
Hence $\operatorname{Aut}(\Gamma)$ is a normal subgroup of $R$.
Thus we have the conclusion.
\end{proof}

Note that in Proposition \ref{prop2}$(2)$, since there exist an element of $\operatorname{Aut}(\Gamma)$ and an element of $\boldsymbol{R}_{>0}$ which are not commutative, we have $\operatorname{Aut}_{\rm semiring}(\operatorname{Rat}(\Gamma)) \not\cong \operatorname{Aut}(\Gamma) \times \boldsymbol{R}_{>0}$.
For example, $((\varphi_1 \circ \operatorname{id}_{\Gamma}) \circ \theta_2)(0) = 1$ and $(\theta_2 \circ (\varphi_1 \circ \operatorname{id}_{\Gamma}))(0) = 2$, and thus $(\varphi_1 \circ \operatorname{id}_{\Gamma}) \circ \theta_2 \not= \theta_2 \circ (\varphi_1 \circ \operatorname{id}_{\Gamma})$.

\end{document}